 \newtheorem{theorem}{Theorem}[section]
 \newtheorem{definition}[theorem]{Definition}
 \newtheorem{lemma}[theorem]{Lemma}
 \newtheorem{remark}[theorem]{Remark}
 \newtheorem{cor}[theorem]{Corollary}
 \newtheorem{pro}[theorem]{Proposition}
\title{  On global solutions to the Navier-Stokes system with large $L^{3,\infty}$ initial data}
\author{T Barker, G Seregin
  \thanks{Email addresses: \texttt{tobias.barker@seh.ox.ac.uk, seregin@maths.ox.ac.uk}; }}
\affil{OxPDE, Mathematical Institute, University of Oxford, Oxford,UK}
\date{ \today}
\begin{document}
\maketitle
\begin{abstract}
This paper addresses a question concerning the behaviour of a sequence of global solutions to the Navier-Stokes equations, with the corresponding sequence of smooth initial data  being bounded in the (non-energy class) weak Lebesgue space $L^{3,\infty}$. It is closely related to the question of what would be a reasonable definition of global weak solutions with a non-energy class of initial data, including the aforementioned Lorentz space. This paper can be regarded as an extension of a similar problem regarding the Lebesgue space $L_3$ to the weak Lebesgue space $L^{3,\infty}$, whose norms are both scale invariant with the respect to the Navier-Stokes scaling. 
\end{abstract}
\setcounter{equation}{0}

\section{Introduction}
In our paper we consider the Cauchy problem for the Navier-Stokes system in the space-time domain $Q_\infty=\mathbb R^3\times ]0,\infty[$ for vector-valued function $v=(v_1,v_2,v_3)=(v_i)$ and scalar function $q$, satisfying the equations
\begin{equation}\label{directsystem}
\partial_tv+v\cdot\nabla v-\Delta v=-\nabla q,\qquad\mbox{div}\,v=0
\end{equation}
in $Q_\infty$,
the boundary conditions
\begin{equation}\label{directbc}
v(x,t)\to 0\end{equation}
as $|x|\to\infty$ for all  $t\in  [0,\infty[$,
and the initial conditions
\begin{equation}\label{directic}
v(\cdot,0)=u_0(\cdot)
\end{equation}
with divergence free function $u_0$ belonging to a weak $L_3(\mathbb R^3)$ space denoted in the paper as  $L^{3,\infty}(\mathbb R^3)$.


 Let us recall the definition of the Lorentz spaces. 
For a measurable function $f:\Omega\rightarrow\mathbb R^m$ define:
\begin{equation}\label{defdist}
d_{f,\Omega}(\alpha):=|\{x\in \Omega : |f(x)|>\alpha\}|.
\end{equation}
Let $s\in ]0,\infty[$ and $l\in ]0,\infty]$. Given a measurable  $\Omega\subseteq\mathbb{R}^{n}$,  the Lorentz space $L^{s,l}(\Omega)$ is the set of all measurable functions $g$ on $\Omega$ such that the quasinorm $\|g\|_{L^{s,l}(\Omega)}$ is finite. Here:

\begin{equation}\label{Lorentznorm}
\|g\|_{L^{s,l}(\Omega)}:= \Big(s\int\limits_{0}^{\infty}\alpha^{l}d_{g,\Omega}(\alpha)^{\frac{l}{s}}\frac{d\alpha}{\alpha}\Big)^{\frac{1}{l}},
\end{equation}
\begin{equation}\label{Lorentznorminfty}
\|g\|_{L^{s,\infty}(\Omega)}:= \sup_{\alpha>0}\alpha d_{g,\Omega}(\alpha)^{\frac{1}{s}}.
\end{equation}

As is the case for the  $L_3(\mathbb R^3)$ norm, the Lorentz norm $L^{3,\infty}(\mathbb R^3)$ is scale invariant with respect to the Navier-Stokes scaling
$$v^\lambda(x,t)=\lambda v(\lambda x,\lambda^2t),\qquad  q^\lambda(x,t)=\lambda^2 q(\lambda x,\lambda^2t).$$
The important difference between the above spaces is that the norm in the space  $L_3(\mathbb R^3)$ possesses a shrinking property, i.e., the norm over a ball vanishes as the radius of this ball goes to zero, while the Lorentz space $L^{3,\infty}(\mathbb R^3)$ does not meet such a property for it's norm. The difference  can also be expressed in terms of the density of smooth compactly supported functions. 
The special interest of the space $L^{3,\infty}(\mathbb R^3)$ as a phase space for the Navier-Stokes equations is due to the fact that, in contrast to the space $L_3(\mathbb R^3)$, it contains minus one homogeneous divergence free functions.

The local in time  existence of strong solutions to the above Cauchy problem is a relatively well known fact proved in a number of papers, see for example,  \cite{cannone1997}, \cite{FujKato1964}, \cite{GigaMiy1989}, \cite{KoTa2001},    \cite{KozYam1994},  \cite{Plan1996}, and \cite{Taylor1992},  with the help of Kato's arguments \cite{Kato1984}. The typical outcome is  local in time existence of the so-called mild solutions under certain assumptions on the initial data\footnote{We are not discussing global existence of mild solutions for small initial data which is a very interesting topic itself but outside of our scope.}. This technique has a perturbative character as it does not take into account the skew symmetry of nonlinear term in full generality. Consequently, there  is an absence of results about global solvability for  large initial data\footnote{We restrict our considerations to three dimensional case only.}.

A breakthrough result in this direction has been established  by Lemarie-Rieusset, see \cite{LR1}. He showed that, for a very wide class of initial data\footnote{The completion of smooth compactly supported divergence free functions in the space $L_{s,{\rm unif}}$ with the finite norm $\|u\|_{s,{\rm unif}}:=\sup\limits_{x\in\mathbb R^3}\|u\|_{L_s(B(x,1))}$ for $s=2$.},
there exists a certain global solution to the initial value problem (\ref{directsystem})-(\ref{directic}) 
that in addition satisfies the local energy inequality. Such a solution exists globally in time if  $u_0\in L^{3,\infty}(\mathbb R^3)$. However, the class of Lemarie-Rieusset's solutions seems to be too wide and one can expect that additionally some global norms are bounded if 
more restrictive classes of initial data with unbounded energy\footnote{$u_0\notin L_2(\mathbb R^3)$.} are considered.

Moreover, there are some additional requirements for the class of weak global solutions. First of them is some kind of stability with respect to weak or weak-(*) convergence of initial data. To be precise, in our case, this would mean the following. Assuming that  a sequence $u_0^{(k)}$ converges weakly-(*) to $u_0$ in $L^{3,\infty}(\mathbb R^3)$, we need to show that the corresponding solutions $u^{(k)}$ with initial data $u_0^{(k)}$ converges in a sense to a solution $u$ with initial data $u_0$. This issue appears if one wants to show that scale invariant norms blow up as time approaches potential
blowup time, see \cite{Ser12} and \cite{BarkerSer}. The second important point is that the conception of Lemarie-Rieusset solutions has not been developed yet for unbounded domains different to the whole space $\mathbb R^3$. This makes it desirable to have a notion of weak global solutions that can be extended to other unbounded domains.

In the paper \cite{sersve2016}, the notion of global weak $L_3$-solutions has been introduced in the case of initial data belonging to the Lebesgue space $L_3(\mathbb R^3)$, which respects the  above two requirements. In addition, in \cite{sersve2016},
 regularity of weak $L_3$-solutions has been proven on a finite time interval,  the length of which depends on the initial data, that in turn implies uniqueness of weak $L_3$-solutions on this finite time interval. The aim of the paper is to implement this program in the case of initial data belonging to the Lorentz space $L^{3,\infty}(\mathbb R^3)$.  

To define our weak solution, we need to introduce additional notation:
$$S(t)u_0(x)=\int\limits_{\mathbb R^3}\Gamma(x-y,t)u_0(y)dy,
$$
where $\Gamma$ is a known heat kernel, $V(x,t):=S(t)u_0(x)$;

$L_s(\Omega)$ is a Lebesgue space in $\Omega\subseteq\mathbb R^3$ so that $L_s(\Omega)=L^{s,s}(\Omega)$ and abbreviations $L_s:=L_s(\mathbb R^3)$ and $L^{s,l}:=L^{s,l}(\mathbb R^3)$ are used;

$J$ and $\stackrel{\circ}J{^1_2}$ are the completion of the space
$$C^\infty_{0,0}(\mathbb R^3):=\{v\in C^\infty_0(\mathbb R^3):\,\,{\rm div}\,v=0\}$$
with respect to $L_2$-norm and the Dirichlet integral
$$\Big(\int\limits_{\mathbb R^3} |\nabla v|^2dx\Big)^\frac 12,$$
 correspondingly. Additionally, we define the space-time domains $Q_T:=\mathbb R^3\times ]0,T[$ and $Q_\infty:=\mathbb R^3\times ]0,\infty[$.
\begin{definition}\label{globalL3inf}

 We say that  $v$ 
 is a  weak $L^{3,\infty}$-solution to Navier-Stokes IBVP in $Q_T$ 
  if 
\begin{equation}\label{weaksolutionsplitting}
v=V+u,
\end{equation}
with $u\in L_{\infty}(0,T; J)\cap L_{2}(0,T;\stackrel{\circ} J{^1_2})$  and   
there exists $q\in L_{\frac 32, {\rm loc}}(Q_T)$ such that $u$ and $q$  satisfy the perturbed Navier Stokes system in the sense of distributions:
\begin{equation}\label{perturbdirectsystem}
\partial_t u+v\cdot\nabla v-\Delta u=-\nabla q,\qquad\mbox{div}\,u=0
\end{equation}
in $Q_T$
Additionally, it is requiried that for any $w\in L_{2}$:
\begin{equation}\label{vweakcontinuity}
t\rightarrow \int\limits_{\mathbb R^3} w(x)\cdot u(x,t)dx
\end{equation}
is a continuous function on  $[0,T].$
Moreover, $u$ satisfies the energy inequality:
\begin{equation}\label{venergyineq}
\|u(\cdot,t)\|_{L_{2}}^2+2\int\limits_{0}^t\int\limits_{\mathbb R^3} |\nabla u(x,t')|^2 dxdt'\leqslant$$$$\leq 2 \int_{0}^t\int_{\mathbb R^3}(V\otimes u+V\otimes V):\nabla udxdt'
\end{equation}
for all $t\in [0,T]$.

Finally, it is required that $v$ and $q$ satisfy the local energy inequality.
Namely, for a.a. $t\in ]0,T[$,
\begin{equation}\label{localenergyinequality}
\int\limits_{\mathbb R^3}\phi(x,t)|v(x,t)|^2dx+2\int\limits_{0}^t\int\limits_{\mathbb R^3}\int\phi |\nabla v|^2 dxdt^{'}\leqslant$$$$\leqslant
\int\limits_{0}^{t}\int\limits_{\mathbb R^3}[|v|^2(\partial_{t}\phi+\Delta\phi)+v\cdot\nabla\phi(|v|^2+2q)] dxdt^{'}
\end{equation}
for all non negative functions $\phi\in C_{0}^{\infty}(Q_T)$.

$v$ is called a global weak $L^{3,\infty}$-weak solution if it is a weak solution in $Q_T$ for any $T>0$.
\end{definition}
\begin{remark}\label{initialdata}
One can see that the right hand side in the energy inequality (\ref{venergyineq})  is finite and thus
the function $u$ satisfies the initial  condition in the strong $L_2$-sense, i.e.,
$u(\cdot,t)\to 0$ in $L_2$. 
\end{remark}

With regards to $V$, we can show that $\|V(\cdot,t)-u_0\|_{L_s,{\rm unif}}\to0$ as $t\to0$ for any $s<3$. In general, $V(\cdot,t)$ does not tends to $u_0$ in $L^{3,\infty}$ which can be easily seen for minus one homogeneous initial data, see \cite{cannone1997}.

The main result of the paper reads the following.
\begin{theorem}\label{weak stability}
Let 
$u_{0}^{(k)}\stackrel{*}{\rightharpoonup} u_0$ in $L^{3,\infty}$ and let $v^{(k)}$ be a sequence of a global weak $L^{3,\infty}$-solutions to the Cauchy problem for the Navier-Stokes system with initial data $u_0^{(k)}$. Then there exists a subsequence still denoted $v^{(k)}$ that converges to a global weak $L^{3,\infty}$-solution $v$ to the Cauchy problem for the Navier-Stokes system with initial data $u_0$,
in the sense of distributions.
\end{theorem}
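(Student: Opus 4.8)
The plan is to exploit the splitting $v^{(k)}=V^{(k)}+u^{(k)}$, where $V^{(k)}(\cdot,t)=S(t)u_0^{(k)}$ carries the non-energy part of the data and $u^{(k)}$ lives in the energy class, and then to obtain compactness separately for the two pieces. First I would record that the weak-$(*)$ convergence $u_0^{(k)}\stackrel{*}{\rightharpoonup}u_0$ forces, by the Banach--Steinhaus theorem, a uniform bound $\sup_k\|u_0^{(k)}\|_{L^{3,\infty}}\le M$. Since the heat semigroup is bounded on $L^{3,\infty}$ and enjoys the smoothing estimates $\|V^{(k)}(\cdot,t)\|_{L^{s,\infty}}\le C t^{-\frac 32(\frac 13-\frac 1s)}\|u_0^{(k)}\|_{L^{3,\infty}}$ for $s\ge 3$, the family $\{V^{(k)}\}$ is uniformly bounded in the relevant mixed norms on each $Q_T$. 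Testing against the Gaussian $\Gamma(x-\cdot,t)$, which belongs to the predual $L^{3/2,1}$ for every $t>0$, the weak-$(*)$ convergence of the data yields $V^{(k)}(x,t)\to V(x,t):=S(t)u_0$ pointwise; interpolating this with the uniform bounds upgrades the convergence to strong convergence $V^{(k)}\to V$ in $L_s(Q_T')$ for $3<s$ and compact $Q_T'\subset Q_T$, and identifies the limiting drift as $S(t)u_0$.

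The crucial step is to derive \emph{uniform} a priori estimates for $u^{(k)}$ from the energy inequality (\ref{venergyineq}). Here I would estimate the right-hand side by H\"older's inequality, the smoothing and decay bounds for $V^{(k)}$, Gagliardo--Nirenberg interpolation $\|u^{(k)}\|_{L_p}\le C\|u^{(k)}\|_{L_2}^{1-\theta}\|\nabla u^{(k)}\|_{L_2}^{\theta}$, and Young's inequality, so as to absorb the factor $\|\nabla u^{(k)}\|_{L_2}$ into the dissipation term on the left; a Gr\"onwall argument then closes the estimate and produces a bound on $\sup_{[0,T]}\|u^{(k)}(\cdot,t)\|_{L_2}^2+\int_0^T\|\nabla u^{(k)}\|_{L_2}^2\,dt$ depending only on $M$ and $T$. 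I expect this to be the main obstacle, precisely because of the feature emphasised above: for $L_3$ data one may use the shrinking property to make $\|V(\cdot,t)\|$ small on a short time interval and treat the critical quadratic term perturbatively, whereas for $L^{3,\infty}$ data no such smallness is available, and the critical term must instead be controlled by exploiting the integrable-in-time decay $\|V^{(k)}(\cdot,t)\|_{L_s}\lesssim t^{-\frac 32(\frac 13-\frac 1s)}$ with $s>3$, whose time exponent is strictly below $\tfrac12$, possibly together with a splitting of $V^{(k)}$ into a small-in-$L_3$ part and a bounded part.

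With the uniform bounds in hand, compactness follows along standard lines. The equation (\ref{perturbdirectsystem}) together with the uniform control of $v^{(k)}\otimes v^{(k)}$ in $L_{3/2,\mathrm{loc}}$ bounds $\partial_t u^{(k)}$ in a negative-order space, so the Aubin--Lions--Simon lemma gives a subsequence with $u^{(k)}\rightharpoonup u$ weakly-$(*)$ in $L_\infty(0,T;J)$, weakly in $L_2(0,T;\stackrel{\circ}J{^1_2})$, and strongly in $L_2(0,T;L_{2,\mathrm{loc}})$; interpolating the strong $L_2$ convergence against the uniform $L_\infty L_2\cap L_2\,\stackrel{\circ}J{^1_2}\hookrightarrow L_{10/3}$ bound upgrades this to strong convergence of $u^{(k)}$, hence of $v^{(k)}=V^{(k)}+u^{(k)}$, in $L_3(Q_T')$ on compact subsets. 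For the pressure I would use the representation $q^{(k)}=R_iR_j(v_i^{(k)}v_j^{(k)})$ modulo the normalisation inherent in the local energy inequality; Calder\'on--Zygmund bounds then give uniform $L_{3/2,\mathrm{loc}}$ control and a weak limit $q^{(k)}\rightharpoonup q$.

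It remains to verify that $v=V+u$ is a weak $L^{3,\infty}$-solution with data $u_0$. The membership $u\in L_\infty(0,T;J)\cap L_2(0,T;\stackrel{\circ}J{^1_2})$ and the energy inequality (\ref{venergyineq}) follow from weak lower semicontinuity of the left-hand norms together with convergence of the right-hand side, where the products $V^{(k)}\otimes u^{(k)}$ and $V^{(k)}\otimes V^{(k)}$ converge strongly and pair against the weakly convergent $\nabla u^{(k)}$; the weak continuity (\ref{vweakcontinuity}) is inherited from the bound on $\partial_t u^{(k)}$ via a density argument. Passage to the limit in the distributional form of (\ref{perturbdirectsystem}) uses the strong local convergence of $v^{(k)}$ to handle the nonlinearity. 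The delicate point is the local energy inequality (\ref{localenergyinequality}): the dissipation term is handled by lower semicontinuity, which is the correct direction for an inequality, the cubic term $v^{(k)}\cdot\nabla\phi\,|v^{(k)}|^2$ by the strong $L_3(Q_T')$ convergence, and the pressure term $v^{(k)}\cdot\nabla\phi\,q^{(k)}$ by the weak-strong pairing of $q^{(k)}\rightharpoonup q$ in $L_{3/2,\mathrm{loc}}$ against $v^{(k)}\to v$ in $L_{3,\mathrm{loc}}$. Finally, since $u(\cdot,t)\to 0$ in $L_2$ and $V(\cdot,t)=S(t)u_0$ by construction, $v$ attains the initial data $u_0$ in the sense required by Definition \ref{globalL3inf}, and a diagonal argument over $T\to\infty$ yields the global solution.
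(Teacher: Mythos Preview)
Your outline is structurally close to the paper's, but the step you flag as ``the main obstacle'' is a genuine gap, and the fix you sketch does not close it. If you estimate the critical term $\int V^{(k)}\otimes u^{(k)}:\nabla u^{(k)}$ by H\"older, Gagliardo--Nirenberg and Young using only the semigroup decay $\|V^{(k)}(\cdot,t)\|_{L_s}\le C M\,t^{-\frac{s-3}{2s}}$ for some $s>3$, the resulting Gr\"onwall coefficient is exactly $\|V^{(k)}(\cdot,t)\|_{L_s}^{2s/(s-3)}\sim t^{-1}$, which is not integrable at $0$; this is the scaling criticality of $L^{3,\infty}$, and no choice of $s>3$ helps. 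Equivalently, the sharp O'Neil bound $\int|V\otimes u:\nabla u|\le c\|u_0\|_{L^{3,\infty}}\|\nabla u\|_{L_2}^2$ shows that for large data the term cannot be absorbed into the dissipation.

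The splitting you mention as ``possibly'' needed is in fact the heart of the argument, and it must be applied not to $V^{(k)}$ but to the initial data itself: write $u_0^{(k)}=\bar u_0^{(k)N}+\tilde u_0^{(k)N}$ with $\bar u_0^{(k)N}\in L_{10/3}$ (bounded part) and $\tilde u_0^{(k)N}\in L_2$ (tail), set $w^{(k)N}=u^{(k)}+S(t)\tilde u_0^{(k)N}$, and derive a \emph{new} energy inequality for $w^{(k)N}$. The point is that in this inequality the drift is $\bar V^{(k)N}=S(t)\bar u_0^{(k)N}$, whose $L_{10/3}$ norm is bounded \emph{uniformly in time} by $CN^{1/10}M^{9/10}$; the Gr\"onwall coefficient then becomes integrable (in the paper, $\sim t^{-3/4}$), and one closes the estimate for $w^{(k)N}$, hence for $u^{(k)}$, with constants depending only on $M$, $T$ and $N$. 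Deriving this modified energy inequality is nontrivial: since $w^{(k)N}$ does not solve a clean perturbed system, the paper obtains it by first proving a local energy inequality for $w^{(k)N}$ from the one for $v^{(k)}$ and then letting the spatial cutoff go to infinity, using the pressure decomposition $q^{(k)}=p_1^{(k)}+p_2^{(k)}+p_3^{(k)}$ (from the three pieces $u\cdot\nabla u$, $V\cdot\nabla V$, cross terms) with separate coercive Stokes estimates to kill the boundary terms. Your single Riesz-transform representation of the pressure is too coarse for this step. The remaining compactness and limit-passage parts of your outline are essentially correct, though the paper obtains strong convergence of $u^{(k)}$ via the compact embedding $W^{2,1}_{s,l}\hookrightarrow C([0,T];L_s(B))$ applied to each piece $u^{i(k)}$, and recovers the global energy inequality for the limit $u$ from the local one rather than by passing to the limit directly.
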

\begin{cor}\label{existenseglobal} There exists at least one global weak $L^{3,\infty}$-solution to the Cauchy problem 
(\ref{directsystem})-(\ref{directic}).
\end{cor}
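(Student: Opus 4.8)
The plan is to obtain existence as a direct consequence of the weak stability result, Theorem~\ref{weak stability}. For an arbitrary divergence free $u_0\in L^{3,\infty}$ I would produce a sequence of initial data $u_0^{(k)}$ that (i) lies in the Lebesgue space $L_3$, for which global weak $L_3$-solutions are already available by \cite{sersve2016}, and (ii) converges to $u_0$ in the weak-$*$ topology of $L^{3,\infty}$. Since $L_3\subset L^{3,\infty}$ and the structural requirements of a weak $L_3$-solution coincide with those of Definition~\ref{globalL3inf} --- the splitting $v=V+u$ with $u$ in the energy class, the energy inequality (\ref{venergyineq}), the weak continuity (\ref{vweakcontinuity}), and the local energy inequality (\ref{localenergyinequality}) --- each such $L_3$-solution is in particular a global weak $L^{3,\infty}$-solution with initial data $u_0^{(k)}$.

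To build the sequence, note that smooth compactly supported functions are not norm dense in $L^{3,\infty}$, so only weak-$*$ approximation can be expected. I would set
\[
u_0^{(k)}:=\mathbb P\big[\eta(\cdot/k)\,(\rho_{1/k}*u_0)\big],
\]
where $\rho_\varepsilon$ is a standard mollifier, $\eta\in C_0^\infty(\mathbb R^3)$ equals $1$ near the origin, and $\mathbb P$ is the Leray--Helmholtz projection; each $u_0^{(k)}$ is then smooth, divergence free, and lies in $L_2\cap L_3$. The required uniform bound $\sup_k\|u_0^{(k)}\|_{L^{3,\infty}}\le C\|u_0\|_{L^{3,\infty}}$ follows from the boundedness of convolution with an $L_1$ kernel, of multiplication by a bounded function, and of $\mathbb P$ on the Lorentz spaces $L^{s,l}$ with $1<s<\infty$. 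For the weak-$*$ convergence I would use that $L^{3,\infty}$ is the dual of $L^{3/2,1}$, in which $C_0^\infty(\mathbb R^3)$ is dense: testing against a fixed $\varphi\in C_0^\infty$, the elementary convergence $u_0^{(k)}\to u_0$ in $L_{s,{\rm loc}}$ for some $s<3$ gives convergence of the pairings, and the uniform $L^{3,\infty}$-bound extends this by density to every element of the predual, which is exactly $u_0^{(k)}\stackrel{*}{\rightharpoonup}u_0$.

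With the sequence in hand, for each $k$ let $v^{(k)}$ be the global weak $L^{3,\infty}$-solution with initial data $u_0^{(k)}$ furnished above. Applying Theorem~\ref{weak stability} to $\{v^{(k)}\}$ yields a subsequence converging, in the sense of distributions, to a global weak $L^{3,\infty}$-solution $v$ with initial data $u_0$, which is the asserted solution. The main obstacle is entirely contained in the construction of the approximating data: one must simultaneously secure membership in $L_3$ (so that existence is already granted), a uniform $L^{3,\infty}$-bound, and weak-$*$ convergence, the latter two being delicate precisely because, as emphasised in the introduction, the $L^{3,\infty}$-norm lacks the shrinking and density properties of the $L_3$-norm. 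Once this is settled, the conclusion is a direct invocation of the stability theorem.
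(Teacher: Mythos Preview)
Your strategy coincides with the paper's: construct $u_0^{(k)}\stackrel{*}{\rightharpoonup}u_0$ in $L^{3,\infty}$, invoke the global weak $L_3$-solutions of \cite{sersve2016} for each $u_0^{(k)}$, observe that these are in particular global weak $L^{3,\infty}$-solutions, and apply Theorem~\ref{weak stability}. The only difference is in the approximation step: the paper simply quotes Proposition~\ref{weak*approx}, whose proof (only sketched there) rests on Lorentz-space estimates for a Neumann boundary problem and delivers $u_0^{(k)}\in C^\infty_{0,0}(\mathbb R^3)$, whereas your mollify--cut off--project construction is explicit and arguably more elementary. One technical caution: your claim that $u_0^{(k)}\to u_0$ in $L_{s,\mathrm{loc}}$ is not quite ``elementary'', since $\mathbb P$ is nonlocal. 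It is nonetheless correct once you use that $\rho_{1/k}*u_0$ is already divergence free, so that $\mathrm{div}\big[\eta(\cdot/k)(\rho_{1/k}*u_0)\big]=k^{-1}(\nabla\eta)(\cdot/k)\cdot(\rho_{1/k}*u_0)$ has $L_2$ norm of order $k^{-1/2}$; hence the gradient part $\nabla\Delta^{-1}\mathrm{div}(\cdot)$ removed by $\mathbb P$ tends to zero in $L_6(\mathbb R^3)$ and therefore in $L_{s,\mathrm{loc}}$. Alternatively, one can bypass local convergence entirely by testing against $\varphi\in C^\infty_{0,0}$ and using self-adjointness of $\mathbb P$, then extending to all of $L^{3/2,1}$ by density and the uniform bound.
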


It is worth noticing that the smooth forward self-similar solution, the existence of which has been proved recently 
in \cite{jiasverak2014}, is a global weak $L^{3,\infty}$-solution.\\
Certain uniqueness and regularity statements, regarding weak $L^{3,\infty}$-solutions, provide further justification of the definition of weak $L^{3,\infty}$-solutions.
 We start with conditional uniqueness results.

\begin{theorem}\label{smoothnessanduniqueness} Let $v$ be a global weak $L^{3,\infty}$-solution to the Cauchy problem for the Navier-Stokes equations with the initial data $u_0\in L^{3,\infty}$. There is a universal constant $\varepsilon_0>0$ with the following property. 
If 
\begin{equation}\label{initialdata1}
\limsup\limits_{R\to0}
\|u_0\|_{L^{3,\infty}(B(x_0,R))}< \varepsilon_0
\end{equation}
for any $x_0\in \mathbb R^3$
and 
\begin{equation}\label{smallness}
 \|v(\cdot,t)-u_0(\cdot)\|_{L^{3,\infty}(\mathbb R^3)}<\varepsilon_0
\end{equation}
holds for all  $t\in]0,T[$, then $v$ is of class $C^\infty$ in $Q_T$.

Moreover, if $\tilde v$ is another global weak $L^{3,\infty}$-solution to the Cauchy problem for the Navier-Stokes equations with the the same initial data $u_0$, 
then $\tilde v=v$ in $Q_T$.
\end{theorem}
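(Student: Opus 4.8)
The plan is to prove the two assertions in turn: first the interior smoothness of $v$, and then uniqueness via a weak--strong argument that exploits the smoothness just obtained.

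For smoothness I would argue pointwise by an $\varepsilon$-regularity criterion for suitable weak solutions, formulated in the scale-invariant quantity $\sup_t\|v(\cdot,t)\|_{L^{3,\infty}(B(x_0,r))}$. Fix an interior point $(x_0,t_0)\in Q_T$. For every $t\in\,]t_0-r^2,t_0[$ the triangle inequality gives $$\|v(\cdot,t)\|_{L^{3,\infty}(B(x_0,r))}\le\|v(\cdot,t)-u_0\|_{L^{3,\infty}(\mathbb R^3)}+\|u_0\|_{L^{3,\infty}(B(x_0,r))},$$ and I would bound the two terms by the two hypotheses: the first is $<\varepsilon_0$ for all $t$ by (\ref{smallness}), while by (\ref{initialdata1}) the second is $<\varepsilon_0$ as soon as $r$ is small. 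Taking in addition $r<\sqrt{t_0}$ keeps the parabolic cylinder inside $Q_T$, so the scaled local norm of $v$ is $<2\varepsilon_0$ on a cylinder at $(x_0,t_0)$. Calibrating $\varepsilon_0$ below the threshold of the $\varepsilon$-regularity statement (with the pressure handled in the usual way, by splitting it into a local singular-integral part and a harmonic remainder) then forces $v$ to be bounded, hence smooth by parabolic bootstrap, near $(x_0,t_0)$; as the point is arbitrary, $v\in C^\infty(Q_T)$.

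For uniqueness I set $w=\tilde v-v=\tilde u-u$; the two solutions share the same $V=S(t)u_0$ because the data agree, so $w$ lies in the energy class with $w(\cdot,t)\to0$ in $L_2$ as $t\to0$ (Remark~\ref{initialdata}). Running the relative energy estimate, testing the difference equation with $w$ and using $\operatorname{div}V=\operatorname{div}u=\operatorname{div}w=0$ to annihilate the transport and cubic terms, I obtain $$\tfrac12\|w(t)\|_2^2+\int_0^t\|\nabla w\|_2^2\,ds\le-\int_0^t\!\!\int_{\mathbb R^3}\big(w\otimes V+w\otimes u\big):\nabla w\,dx\,ds.$$ The Lorentz--H\"older inequality together with the Sobolev embedding $\stackrel{\circ} J{^1_2}\hookrightarrow L^{6,2}$ and Young's inequality convert the $V$-contribution into $\tfrac14\|\nabla w\|_2^2+C\|V(\cdot,s)\|_{L_q}^{p}\|w\|_2^2$ for any Serrin pair $2/p+3/q=1$, $q>3$, the $u$-contribution being of Leray--Hopf type and absorbed by the standard machinery once $u=v-V$ is smooth.

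The main obstacle is the critical endpoint. Since $u_0\in L^{3,\infty}$ one only has $\|V(\cdot,s)\|_{L_q}\lesssim s^{-(1-3/q)/2}\|u_0\|_{L^{3,\infty}}$, whence $\|V(\cdot,s)\|_{L_q}^{p}\sim s^{-1}$ and the Gronwall integral $\int_0^t\|V\|_{L_q}^p\,ds$ diverges logarithmically, so ordinary Gronwall cannot close from $w(0)=0$. This is exactly where hypothesis (\ref{initialdata1}) is indispensable: the no-concentration condition yields $\limsup_{s\to0}s^{1/p}\|V(\cdot,s)\|_{L_q}\le c\,\sup_{x_0}\limsup_{R\to0}\|u_0\|_{L^{3,\infty}(B(x_0,R))}<c\varepsilon_0$, so on a short interval $]0,T_*[$ the singular coefficient $s^{-1}$ carries the small prefactor $(c\varepsilon_0)^p$; a weighted (singular) Gronwall lemma then forces $w\equiv0$ on $]0,T_*[$ provided $\varepsilon_0$ is chosen small enough. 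Condition (\ref{smallness}) enters here through the complementary fact that $v$ itself lies in $L_\infty(]0,T[;L^{3,\infty})$ with excess at most $\varepsilon_0$ over the fixed datum $u_0$, which is what makes the endpoint estimate uniform in time. Finally, since $v$ has already been shown to be smooth on all of $Q_T$, a classical Serrin weak--strong uniqueness argument on $[T_*,T[$, where $v$ is a bounded smooth solution, propagates $w\equiv0$ up to time $T$, completing the proof.
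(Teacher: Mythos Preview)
Your regularity argument (Step I) is essentially the paper's: combine the triangle inequality $\|v(\cdot,t)\|_{L^{3,\infty}(B(x_0,r))}\le\|v(\cdot,t)-u_0\|_{L^{3,\infty}}+\|u_0\|_{L^{3,\infty}(B(x_0,r))}<2\varepsilon_0$ with an $\varepsilon$-regularity criterion to get local boundedness, then bootstrap. The paper in fact uses the pressure-free criterion of Seregin--Zajaczkowski, so your remark about handling $q$ by a singular-integral/harmonic splitting is unnecessary; and the paper also treats the far-field region separately (using the pressure decomposition of Lemma~\ref{higherderivativeandpressure}) to upgrade local boundedness to $v\in L_\infty(Q_{\delta,T})$, a step you skip. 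These are minor.

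The uniqueness argument, however, contains a genuine gap. Your key claim, that the pointwise no-concentration hypothesis (\ref{initialdata1}) forces
\[
\limsup_{s\to0}s^{1/p}\|V(\cdot,s)\|_{L_q}\le c\,\sup_{x_0}\limsup_{R\to0}\|u_0\|_{L^{3,\infty}(B(x_0,R))},
\]
is false in general. Take $u_0$ to be a sum of $N$ widely separated copies of a fixed $(-1)$-homogeneous profile (suitably localised and made solenoidal). Each copy has the same local concentration, so the right-hand side is independent of $N$; but for $s$ small the copies do not interact under the heat flow, so $\|V(\cdot,s)\|_{L_q}^q$ scales like $N$ times the one-copy value, and the left-hand side is proportional to $N^{1/q}$. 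No universal constant $c$ can absorb this. The pointwise condition (\ref{initialdata1}) simply does not control the global Kato norm; for that one needs a condition on the distribution function, as in Proposition~\ref{kozonoyamazakitypecondition}. Consequently your singular Gronwall step does not close, and the handling of the $u$-contribution ``once $u$ is smooth'' is also problematic near $t=0$, where smoothness of $u$ has not been established.

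The paper avoids this difficulty entirely by a different device. After reaching
\[
\|w(\cdot,t_0)\|_{L_2}^2+\int_0^{t_0}\!\!\int_{\mathbb R^3}|\nabla w|^2\,dx\,dt\le c\int_0^{t_0}\!\!\int_{\mathbb R^3}|v|^2|w|^2\,dx\,dt,
\]
one fixes an arbitrary $s\in\,]0,T[$ and writes $v(x,t)=(v(x,t)-u_0(x))-(v(x,s)-u_0(x))+v(x,s)$. The first two pieces have $L^{3,\infty}$ norm $<\varepsilon_0$ by hypothesis (\ref{smallness}) and are absorbed into $\int|\nabla w|^2$ via the Lorentz--H\"older/Sobolev inequality; the third piece is a \emph{fixed} function with $\|v(\cdot,s)\|_{L_\infty}\le g(s)<\infty$ by the regularity already proved in Step~I, and produces a term $cg^2(s)\int_0^{t_0}\|w(\cdot,t)\|_{L_2}^2\,dt$ to which ordinary Gronwall applies. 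Thus (\ref{smallness}) is what carries the smallness in the uniqueness step, while (\ref{initialdata1}) enters only indirectly, through the boundedness of $v(\cdot,s)$ obtained in Step~I.
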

\begin{cor}\label{uniqueness}
Let $v$ and $\tilde v$ be two global weak $L^{3,\infty}$-solution to the Cauchy problem for the Navier-Stokes equations with the same initial data $u_0$. Suppose that $v\in C([0,T];L^{3,\infty})$. Then $\tilde v=v$ in $Q_T$.
\end{cor}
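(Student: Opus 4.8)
The plan is to deduce the corollary from Theorem \ref{smoothnessanduniqueness} by verifying its two hypotheses \eqref{initialdata1} and \eqref{smallness} on a short initial interval, and then propagating both the smoothness and the coincidence $\tilde v=v$ by a continuation argument. Hypothesis \eqref{smallness} is the easy one: since $v\in C([0,T];L^{3,\infty})$ with $v(\cdot,0)=u_0$, continuity at $t=0$ produces a $T_1>0$ with $\|v(\cdot,t)-u_0\|_{L^{3,\infty}}<\varepsilon_0$ for all $t\in[0,T_1]$. The delicate point, which I expect to be the main obstacle, is verifying the local smallness \eqref{initialdata1} of $u_0$: because the $L^{3,\infty}$ norm lacks the shrinking property emphasized in the introduction, temporal continuity does not transfer to spatial local control in an obvious way.

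To overcome this I would exploit the splitting $v=V+u$. Writing $v(\cdot,t)=S(t)u_0+u(\cdot,t)$, I would first show that $u(\cdot,t_k)\to0$ in $L^{3,\infty}$ along some sequence $t_k\to0$. Indeed, by Remark \ref{initialdata} one has $u(\cdot,t)\to0$ in $L_2$, while $u\in L_2(0,T;\stackrel{\circ}J{^1_2})$ forces $\|\nabla u(\cdot,t_k)\|_{L_2}$ to stay bounded along some sequence $t_k\to0$; the Sobolev embedding $\stackrel{\circ}J{^1_2}\hookrightarrow L_6$ then bounds $\|u(\cdot,t_k)\|_{L_6}$. Interpolating $L_3$ between $L_2$ and $L_6$ and using $L_3\hookrightarrow L^{3,\infty}$ gives $\|u(\cdot,t_k)\|_{L^{3,\infty}}\to0$. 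Combined with the continuity relation $v(\cdot,t_k)\to u_0$ in $L^{3,\infty}$, this yields $S(t_k)u_0\to u_0$ in $L^{3,\infty}$.

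From $S(t_k)u_0\to u_0$ the local smallness follows. For each fixed $k$ the function $S(t_k)u_0$ is bounded, so its $L^{3,\infty}$ norm over $B(x_0,R)$ vanishes as $R\to0$ (a bounded function has $\|f\|_{L^{3,\infty}(B(x_0,R))}\le \|f\|_\infty R$); hence, using the quasi-triangle inequality and restriction,
$$\limsup_{R\to0}\|u_0\|_{L^{3,\infty}(B(x_0,R))}\le C\,\|u_0-S(t_k)u_0\|_{L^{3,\infty}},$$
and letting $k\to\infty$ shows the left-hand side is $0$, uniformly in $x_0$, which is stronger than \eqref{initialdata1}. With both \eqref{initialdata1} and \eqref{smallness} verified, Theorem \ref{smoothnessanduniqueness} applies on $[0,T_1]$ and gives that $v$ is smooth in $Q_{T_1}$ and that $\tilde v=v$ there.

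Finally I would propagate to all of $[0,T]$. Since $v:[0,T]\to L^{3,\infty}$ is uniformly continuous, there is a single $\delta>0$ with $\|v(\cdot,t)-v(\cdot,t_0)\|_{L^{3,\infty}}<\varepsilon_0$ whenever $|t-t_0|\le\delta$. At any restart time $t_0\in(0,T)$ up to which $v$ is already known to be smooth, the value $v(\cdot,t_0)$ is bounded, so the analogue of \eqref{initialdata1} for the shifted data holds automatically (its limsup is $0$), while the analogue of \eqref{smallness} holds on $[t_0,t_0+\delta]$ by the uniform modulus; moreover $\tilde v(\cdot,t_0)=v(\cdot,t_0)$ by the coincidence already established, so $\tilde v$ is an admissible competitor. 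Applying the time-shifted version of Theorem \ref{smoothnessanduniqueness} extends the smoothness of $v$ and the identity $\tilde v=v$ across each such step. Because the step length $\delta$ is uniform, finitely many steps cover $[0,T]$, yielding $\tilde v=v$ in $Q_T$.
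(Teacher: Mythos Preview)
Your overall strategy matches the paper's exactly: verify \eqref{initialdata1} and \eqref{smallness} on a short initial interval via Theorem~\ref{smoothnessanduniqueness}, then propagate across $[0,T]$ using uniform continuity of $t\mapsto v(\cdot,t)$ in $L^{3,\infty}$ and the boundedness of $v(\cdot,t_0)$ at each restart. The propagation step is fine.

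There is, however, a genuine gap in your verification of \eqref{initialdata1}. The claim that $u\in L_2(0,T;\stackrel{\circ}J{^1_2})$ ``forces $\|\nabla u(\cdot,t_k)\|_{L_2}$ to stay bounded along some sequence $t_k\to0$'' is false: the profile $\|\nabla u(\cdot,t)\|_{L_2}=t^{-1/4}$ is square-integrable near $0$ yet blows up along every sequence $t_k\to0$. Without this boundedness, the interpolation $\|u(\cdot,t_k)\|_{L_3}\le\|u(\cdot,t_k)\|_{L_2}^{1/2}\|u(\cdot,t_k)\|_{L_6}^{1/2}$ does not yield $\|u(\cdot,t_k)\|_{L^{3,\infty}}\to0$; even the quantitative estimate \eqref{venergybddscaled} only gives $\|u(\cdot,t)\|_{L_2}\lesssim t^{1/4}$ together with $\|\nabla u(\cdot,s)\|_{L_2}\lesssim t^{-1/4}$ for some $s\in(t/2,t)$, so the product is merely bounded. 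Consequently your intermediate conclusion $S(t_k)u_0\to u_0$ in $L^{3,\infty}$ is unsupported (and indeed need not hold for general $u_0\in L^{3,\infty}$; cf.~Proposition~\ref{semigroupweakL3}).

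The fix is to drop this detour and argue as the paper does. For a.a.\ $t>0$ one has $u(\cdot,t)\in L_2\cap L_6\subset L_3$ and $V(\cdot,t)=S(t)u_0\in L_\infty$, so both summands of $v(\cdot,t)=V(\cdot,t)+u(\cdot,t)$ have the shrinking property $\|\cdot\|_{L^{3,\infty}(B(x_0,R))}\to0$ as $R\to0$. Pick such a $t_k\to0$ and use the quasi-triangle inequality together with the assumed continuity $v(\cdot,t_k)\to u_0$ in $L^{3,\infty}$:
\[
\limsup_{R\to0}\|u_0\|_{L^{3,\infty}(B(x_0,R))}\le C\|u_0-v(\cdot,t_k)\|_{L^{3,\infty}}+C\limsup_{R\to0}\|v(\cdot,t_k)\|_{L^{3,\infty}(B(x_0,R))}=C\|u_0-v(\cdot,t_k)\|_{L^{3,\infty}}\to0.
\]
This gives \eqref{initialdata1} (in fact with limit $0$) directly, without ever needing $u(\cdot,t_k)\to0$ in $L^{3,\infty}$.
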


As to regularity, we can state the following.
\begin{theorem}\label{regularity} Suppose that $u_0\in L^{3,\infty}$.
There exists a universal constant $\varepsilon>0$ such that 
if 
\begin{equation}\label{smallnessasmp}
\langle   V\rangle _{Q_T}:=\sup\limits_{0<t<T}t^\frac 15\|V(\cdot,t)\|_{L_5}\leq \varepsilon,
\end{equation}
where $V(\cdot,t)=S(t)u_0(\cdot)$, then
there exists a $v$ that is a weak $L^{3,\infty}$-solution to the Cauchy problem for the Navier-Stokes system in $Q_T$ and satisfies the  property
\begin{equation}\label{lps}
\langle v\rangle_{Q_T}<2\langle  V\rangle_{Q_T}.
\end{equation}

Moreover, the following estimate is valid
\begin{equation}\label{bound1}
\|v-V\|_{L_\infty(0,T;L_{3})}<\langle V\rangle_{Q_{T}}
\end{equation}
 
\end{theorem}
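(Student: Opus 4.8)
\section*{Proof proposal}

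The plan is to construct $v$ as a mild solution by a fixed-point argument in the Kato-type space measured by $\langle\,\cdot\,\rangle_{Q_T}$, and then to promote this mild solution to a genuine weak $L^{3,\infty}$-solution in the sense of Definition \ref{globalL3inf} by bootstrapping regularity. Writing $v=V+u$ and using that $V$ solves the heat equation together with ${\rm div}\,v=0$, the perturbation $u$ should satisfy the Duhamel equation
\begin{equation*}
u=-\mathcal B(v,v),\qquad \mathcal B(f,g)(t):=\int_0^t S(t-s)\,\mathbb P\,\nabla\cdot(f\otimes g)(s)\,ds,
\end{equation*}
where $\mathbb P$ is the Leray--Helmholtz projection. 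Substituting $v=V+u$ turns this into a fixed-point problem $u=\Phi(u):=-\mathcal B(V,V)-\mathcal B(V,u)-\mathcal B(u,V)-\mathcal B(u,u)$ for $u$ in the space with finite norm $\langle\,\cdot\,\rangle_{Q_T}$.

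The first step is the bilinear estimate $\langle \mathcal B(f,g)\rangle_{Q_T}\le C\langle f\rangle_{Q_T}\langle g\rangle_{Q_T}$. It follows from the smoothing bound $\|S(\tau)\mathbb P\nabla\cdot F\|_{L_5}\lesssim \tau^{-4/5}\|F\|_{L_{5/2}}$ together with $\|f\otimes g\|_{L_{5/2}}\le\|f\|_{L_5}\|g\|_{L_5}$ and the convolution identity $\int_0^t(t-s)^{-4/5}s^{-2/5}\,ds=c_0\,t^{-1/5}$ with $c_0<\infty$, the factor $t^{-1/5}$ cancelling the weight $t^{1/5}$ in $\langle\,\cdot\,\rangle_{Q_T}$. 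Given $\langle V\rangle_{Q_T}\le\varepsilon$, this estimate makes $\Phi$ a contraction on a small closed ball of the Kato space once $\varepsilon$ is small (depending only on $C$), and the Banach fixed point yields $u$ with $\langle u\rangle_{Q_T}<\langle V\rangle_{Q_T}$, hence $\langle v\rangle_{Q_T}\le\langle V\rangle_{Q_T}+\langle u\rangle_{Q_T}<2\langle V\rangle_{Q_T}$, which is \eqref{lps}. Estimate \eqref{bound1} comes directly from $u=-\mathcal B(v,v)$: the bound $\|S(\tau)\mathbb P\nabla\cdot F\|_{L_3}\lesssim\tau^{-3/5}\|F\|_{L_{5/2}}$ and $\int_0^t(t-s)^{-3/5}s^{-2/5}\,ds=c_1<\infty$ give $\|u(t)\|_{L_3}\lesssim\langle v\rangle_{Q_T}^2\lesssim\varepsilon\,\langle V\rangle_{Q_T}$, which is $<\langle V\rangle_{Q_T}$ after shrinking $\varepsilon$.

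Next I would place $u$ in the energy class and prove smoothness. Applying $\|S(\tau)\mathbb P\nabla\cdot F\|_{L_2}\lesssim\tau^{-7/20}\|F\|_{L_{5/2}}$ and $\|\nabla S(\tau)\mathbb P\nabla\cdot F\|_{L_2}\lesssim\tau^{-17/20}\|F\|_{L_{5/2}}$ to $u=-\mathcal B(v,v)$ yields $\|u(t)\|_{L_2}\lesssim t^{1/4}\langle v\rangle_{Q_T}^2$ and $\|\nabla u(t)\|_{L_2}\lesssim t^{-1/4}\langle v\rangle_{Q_T}^2$; the former gives $u\in L_\infty(0,T;J)$ with $u(\cdot,t)\to0$ in $L_2$ as $t\to0$ (the strong initial condition of Remark \ref{initialdata}), and the latter, since $\int_0^T t^{-1/2}\,dt<\infty$, gives $\nabla u\in L_2(Q_T)$, i.e. $u\in L_2(0,T;\stackrel{\circ}J{^1_2})$. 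Moreover $\|v(t)\|_{L_5}\le\langle v\rangle_{Q_T}t^{-1/5}$ is bounded on every slab $[\delta,T]$, so $v\in L_5(\mathbb R^3\times(\delta,T))$ for each $\delta>0$; a standard bootstrap of the integral equation (parabolic smoothing, successively raising the integrability exponent) then shows $v\in C^\infty(Q_T)$.

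The main obstacle, and the heart of the matter, is to verify that this smooth mild solution genuinely satisfies all the structural requirements of Definition \ref{globalL3inf}. The pressure is recovered from $-\Delta q=\partial_i\partial_j(v_iv_j)$ via Riesz transforms; since $v\otimes v$ lies in $L_{5/2}$ in space with the time weight, $q\in L_{5/2}\subset L_{3/2,{\rm loc}}(Q_T)$. The weak-in-time continuity \eqref{vweakcontinuity} follows from $u\in C((0,T];L_2)$ together with $u(\cdot,t)\to0$ strongly in $L_2$ as $t\to0$. Both energy relations are then derived from the established regularity: testing the perturbed system with $u$ on $[\delta,t]$, using ${\rm div}\,u={\rm div}\,V=0$ to annihilate the cubic self-interaction $\int u\otimes u:\nabla u=0$, and letting $\delta\to0$ (licit because $\|u(\delta)\|_{L_2}\to0$) gives the energy inequality \eqref{venergyineq}; multiplying the equation for $v$ by $\phi v$ with $\phi\in C_0^\infty(Q_T)$ and integrating by parts --- justified since $\phi$ is supported in the interior where $v$ is smooth and $q$ is locally integrable --- gives the local energy equality, a fortiori \eqref{localenergyinequality}. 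The delicate points throughout concern the behaviour near $t=0$, where $V\notin L_2$ and the relevant norms blow up like $t^{-1/5}$, which is precisely why the temporal weights and the vanishing of $\|u(\delta)\|_{L_2}$ must be used to pass to the limit; and one must arrange the bootstrap to smoothness so that it does not implicitly presuppose the very energy inequality it is meant to furnish, by relying on the integral-equation smoothing rather than on an a priori suitability hypothesis.
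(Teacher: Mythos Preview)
Your proposal is correct and follows the same Kato--Picard strategy as the paper: build $v$ by iteration/contraction in the weighted $L_5$ space, obtain $\langle v\rangle_{Q_T}<2\langle V\rangle_{Q_T}$ and $\|v-V\|_{L_\infty(0,T;L_3)}<\langle V\rangle_{Q_T}$ from the bilinear estimate, then verify the full Definition~\ref{globalL3inf}. The one technical divergence worth noting is in how $u$ is placed in the energy class. The paper interpolates the Kato norm with the uniform $L^{3,\infty}$ bound to get $v^{(k)}\to v$ in $L_4(Q_T)$, and then reads off $u^{(k)}\to u$ in $C([0,T];L_2)\cap W^{1,0}_2(Q_T)$ together with $q^{(k)}\to q$ in $L_2(Q_T)$ from the single estimate
\[
\|u^{(k)}-u^{(m)}\|_{2,\infty,Q_T}^2+\|\nabla(u^{(k)}-u^{(m)})\|_{2,Q_T}^2+\|q^{(k)}-q^{(m)}\|_{2,Q_T}^2\le c\,\|v^{(k)}\otimes v^{(k)}-v^{(m)}\otimes v^{(m)}\|_{2,Q_T}^2,
\]
which comes from testing the equation and the pressure equation. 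You instead bound $\|u(t)\|_{L_2}$ and $\|\nabla u(t)\|_{L_2}$ directly from the Duhamel formula via Oseen-kernel smoothing. Both routes are valid; the paper's buys a global $L_2(Q_T)$ pressure for free (useful for the local energy equality via mollification), while yours gives the quantitative decay $\|u(t)\|_{L_2}\lesssim t^{1/4}$ without any intermediate interpolation. The remaining verification of the energy relations is handled the same way in both.
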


It is easy to verify that a solution of Theorem \ref{regularity} is infinitely smooth in $Q_T$.

Although the main condition (\ref{smallnessasmp}) holds for a wide class initial data, it does not work for large minus one homogeneous initial data, see details in \cite{cannone1997}.
 
Finally, there will be shown that under Kozono-Yamazaki condition, see \cite{KozYam1995}, any global weak $L^{3,\infty}$-solution is unique and smooth on a short time interval.
  
 \begin{pro}\label{kozonoyamazakitypecondition}
 Let $u_{0}\in L^{3,\infty}$.
 There exists an $\varepsilon_{3}>0$ such that if
 \begin{equation}\label{initialdatacondition}
 \limsup_{\alpha\rightarrow\infty}\alpha d_{u_0,\mathbb{R}^3}(\alpha)^{\frac{1}{3}}<\varepsilon_3  
 \end{equation}
 then there exists a $T=T(u_{0})>0$ such that all global weak $L^{3,\infty}$ solutions, with initial data $u_0\in L^{3,\infty}$, coincide on $Q_{T}$.
 
 \end{pro}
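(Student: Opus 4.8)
The plan is to reduce the statement to the existence result of Theorem \ref{regularity} together with the conditional uniqueness of Theorem \ref{smoothnessanduniqueness}, using the hypothesis (\ref{initialdatacondition}) to supply the quantitative smallness those results require. Concretely, from (\ref{initialdatacondition}) I must extract three facts about $u_0$: the local smallness (\ref{initialdata1}), the short-time smallness (\ref{smallnessasmp}) of $\langle V\rangle_{Q_T}$, and the $L^{3,\infty}$-closeness (\ref{smallness}) of the constructed solution to $u_0$. The first two are routine; the third is the crux.

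The basic tool is the truncation $u_0=g+h$ with $g=u_0\mathbf 1_{\{|u_0|\le M\}}$ and $h=u_0\mathbf 1_{\{|u_0|>M\}}$. A direct computation with the distribution function gives $\|h\|_{L^{3,\infty}}=\sup_{\alpha\ge M}\alpha\,d_{u_0}(\alpha)^{1/3}$, which decreases to $\limsup_{\alpha\to\infty}\alpha\,d_{u_0}(\alpha)^{1/3}<\varepsilon_3$ as $M\to\infty$, so $\|h\|_{L^{3,\infty}}<\varepsilon_3$ once $M$ is large, while $g\in L_\infty\cap L^{3,\infty}\subset L_5$. The same splitting yields (\ref{initialdata1}): over $B(x_0,R)$ one bounds the part with $\alpha\le A$ by $A|B_R|^{1/3}\to0$ and the part with $\alpha>A$ by $\sup_{\alpha>A}\alpha\,d_{u_0}(\alpha)^{1/3}$, whence $\limsup_{R\to0}\|u_0\|_{L^{3,\infty}(B(x_0,R))}\le\limsup_{\alpha\to\infty}\alpha\,d_{u_0}(\alpha)^{1/3}<\varepsilon_3$, uniformly in $x_0$; taking $\varepsilon_3\le\varepsilon_0$ secures (\ref{initialdata1}).

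Next I would bound $\langle V\rangle_{Q_T}$ by writing $V=S(t)g+S(t)h$. The Lorentz heat estimate $\|S(t)h\|_{L_5}\le C\,t^{-1/5}\|h\|_{L^{3,\infty}}$ gives $t^{1/5}\|S(t)h\|_{L_5}\le C\varepsilon_3$, while $t^{1/5}\|S(t)g\|_{L_5}\le t^{1/5}\|g\|_{L_5}\to0$ as $t\to0$ since $g\in L_5$. Hence $\limsup_{t\to0}t^{1/5}\|V(\cdot,t)\|_{L_5}\le C\varepsilon_3$, so for $\varepsilon_3$ small and $T=T(u_0)$ small one has $\langle V\rangle_{Q_T}\le\varepsilon$. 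Theorem \ref{regularity} then produces a weak $L^{3,\infty}$-solution $v^*$ on $Q_T$ with $\langle v^*\rangle_{Q_T}<2\langle V\rangle_{Q_T}$ and $\|v^*-V\|_{L_\infty(0,T;L_3)}<\langle V\rangle_{Q_T}$, which is smooth in $Q_T$.

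It remains to verify (\ref{smallness}) for $v^*$. Since $v^*-u_0=(v^*-V)+(S(t)u_0-u_0)$ and $\|v^*-V\|_{L^{3,\infty}}\le c\|v^*-V\|_{L_3}<c\langle V\rangle_{Q_T}\le c\varepsilon$, everything reduces to controlling $\|S(t)u_0-u_0\|_{L^{3,\infty}}$; by the contraction of $S(t)$ on $L^{3,\infty}$ one has $\|S(t)h-h\|_{L^{3,\infty}}\le 2\|h\|_{L^{3,\infty}}<2\varepsilon_3$, so the whole difficulty concentrates in the bounded part, where I must show $\|S(t)g-g\|_{L^{3,\infty}}$ is small for small $t$. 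This is the main obstacle, because the heat semigroup is \emph{not} strongly continuous on the non-separable space $L^{3,\infty}$ and $g$ need not lie in the closure of smooth functions; note in particular that $\limsup_{\alpha\to0}\alpha\,d_{u_0}(\alpha)^{1/3}$ is \emph{not} controlled by (\ref{initialdatacondition}), so a spread-out small-value tail must be handled by regularity of the flow rather than by smallness. I would exploit that boundedness excises small-scale singularities: split $g$ into its part on $\{|g|>\sigma\}$, which lies in $L_3$ and whose heat flow converges in $L_3\hookrightarrow L^{3,\infty}$, and a remainder, estimating the remainder's increment through the improved decay of $S(t)g-g=\int_0^t S(s)\Delta g\,ds$ at scales $\gtrsim\sqrt t$. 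Granting $\limsup_{t\to0}\|S(t)g-g\|_{L^{3,\infty}}$ small, one obtains $\|v^*(\cdot,t)-u_0\|_{L^{3,\infty}}<\varepsilon_0$ on a possibly smaller interval $(0,T)$, and Theorem \ref{smoothnessanduniqueness} applied to $v^*$ shows that every global weak $L^{3,\infty}$-solution with data $u_0$ equals $v^*$ on $Q_T$, so all such solutions coincide. Should the $L^{3,\infty}$-continuity of the flow at $g$ resist a clean proof for pathological tails, the fallback is to prove the coincidence directly as a weak--strong uniqueness statement, comparing an arbitrary weak solution with the Kato-class solution $v^*$ through the energy inequality (\ref{venergyineq}) and absorbing the nonlinear terms by means of the smallness of $\langle v^*\rangle_{Q_T}$ and $\langle V\rangle_{Q_T}$.
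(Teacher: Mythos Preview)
Your setup matches the paper's: the truncation $u_0=g+h$ with $g=(u_0)^N_-$ bounded and $\|h\|_{L^{3,\infty}}<\varepsilon_3$, the verification of the local smallness condition~(\ref{initialdata1}), the estimate $\langle V\rangle_{Q_T}\le C\varepsilon_3+CT^{1/5}N^{2/5}\|u_0\|_{L^{3,\infty}}^{3/5}$, and the appeal to Theorem~\ref{regularity} to produce a good solution $v^*$ with $\|v^*-V\|_{L_\infty(0,T;L_3)}<\langle V\rangle_{Q_T}$ are all exactly what the paper does.

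The gap is precisely where you feared. You attempt to verify condition~(\ref{smallness}) of Theorem~\ref{smoothnessanduniqueness} literally, and for this you need $\|S(t)g-g\|_{L^{3,\infty}}$ to be small for small $t$. This need \emph{not} hold, even though $g$ is bounded. Take $u_0$ bounded with $u_0(x)=c/|x|$ for $|x|\ge1$ and $c$ of unit size: then $d_{u_0}(\alpha)=0$ for large $\alpha$, so (\ref{initialdatacondition}) is satisfied trivially, yet $g=u_0$ and $u_0-c/|x|$ is compactly supported (hence its heat increment vanishes in $L^{3,\infty}$), while $\|S(t)(c/|x|)-c/|x|\|_{L^{3,\infty}}$ is a positive constant independent of $t$ by scale invariance. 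Hence $\liminf_{t\to0}\|S(t)g-g\|_{L^{3,\infty}}>0$. Your proposed remedies do not help: the further splitting $g=g\mathbf 1_{\{|g|>\sigma\}}+g\mathbf 1_{\{|g|\le\sigma\}}$ leaves a remainder whose $L^{3,\infty}$ norm need not be small (in the example above it carries the entire $1/|x|$ tail), and the Duhamel identity $S(t)g-g=\int_0^tS(s)\Delta g\,ds$ is unavailable since $g$ is merely bounded.

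The paper sidesteps this entirely by \emph{changing the reference point}: instead of proving $\|v^*(\cdot,t)-u_0\|_{L^{3,\infty}}<\varepsilon_0$, it proves
\[
\|v^*-S(t)g\|_{L_\infty(0,T;L^{3,\infty})}\le\|v^*-V\|_{L_\infty(0,T;L_3)}+\|S(t)h\|_{L_\infty(0,T;L^{3,\infty})}<\langle V\rangle_{Q_T}+C\varepsilon_3<\varepsilon_0,
\]
and then observes that the time-dependent reference $S(t)g$ is bounded on $Q_T$ and satisfies $\|S(t)g\|_{L^{3,\infty}(B(x_0,R))}\le CNR$. These two properties are exactly what the proof of Theorem~\ref{smoothnessanduniqueness} actually uses from $u_0$ (local shrinking for Step~I regularity, boundedness for the $I_3$ term in Step~II), so the argument goes through with $S(t)g$ in place of $u_0$ after ``minor adaptations''. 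No control of $\|S(t)g-g\|_{L^{3,\infty}}$ is ever needed.
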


\setcounter{equation}{0}
\section{Preliminaries}

Now we state  a fact about Lorentz spaces concerning a decomposition. The proof can be found in \cite{BarkerSer}. This will be formulated as a Lemma. Analogous statement  is Lemma II.I proven by Calderon in \cite{Calderon90}. 
\begin{lemma}\label{Decomp}
Take $1< t<r<s\leqslant\infty$, and suppose that $g\in L^{r,\infty}(\Omega)$. For any $N>0$, we let 
$g^N_-:= g\chi_{|g|\leqslant N}$ and $g^N_+:= g-g^N_-.$
Then 
\begin{equation}\label{bddpartg}
\|g^N_-\|_{L_{s}(\Omega)}^{s}\leqslant\frac{s}{s-r}N^{s-r}\|g\|_{L^{r,\infty}(\Omega)}^{r}-N^{s}d_{g}(N)
\end{equation}
if $s<\infty$
, and 
\begin{equation}\label{unbddpartg}
\|g^N_+\|_{L_{t}(\Omega)}^{t}\leqslant \frac{r}{r-t}N^{t-r}\|g\|_{L^{r,\infty}(\Omega)}^{r}.
\end{equation}
Moreover, for $\Omega=\mathbb R^3$
, if $g\in L^{r,l}$ with $1\leqslant l\leqslant\infty$ and $\rm{div}\,\,g=0$, 
 then
$g= \bar{g}^{N}+\tilde{g}^{N}$ where $\bar g^{N}\in [C_{0,0}^{\infty}(\mathbb R^3)]^{L_{s}(\mathbb R^3)}$ with
\begin{equation}\label{g1divfree}
\|\bar g^{N}\|_{L_{s}}^s\leqslant \frac{Cs}{s-r}N^{s-r}\|g\|_{L^{r,\infty}}^{r}
\end{equation}
and $\tilde g^{N}\in [C_{0,0}^{\infty}(\mathbb R^3)]^{L_{t}(\mathbb R^3)}$ with \begin{equation}\label{g2divfree}
\|\tilde g^{N}\|_{L_{t}}^t\leqslant \frac{Cr}{r-t}N^{t-r}\|g\|_{L^{r,\infty}}^{r}.
\end{equation}
\end{lemma}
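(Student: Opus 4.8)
The plan is to handle the two scalar truncation estimates (\ref{bddpartg}) and (\ref{unbddpartg}) by the layer-cake formula, and then to restore the divergence-free condition in the final assertion by applying the Helmholtz--Leray projection. The two scalar bounds require no structure on $\Omega$ beyond measurability, so I would establish them first and in full generality.

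First I would compute the distribution functions of the truncations directly. Since $|g^N_-|=|g|$ on $\{|g|\leqslant N\}$ and vanishes elsewhere, one has $d_{g^N_-}(\alpha)=d_g(\alpha)-d_g(N)$ for $0<\alpha<N$ and $d_{g^N_-}(\alpha)=0$ for $\alpha\geqslant N$. Feeding this into the layer-cake identity $\|g^N_-\|_{L_s(\Omega)}^s=s\int_0^\infty\alpha^{s-1}d_{g^N_-}(\alpha)\,d\alpha$ yields the exact expression
\begin{equation*}
\|g^N_-\|_{L_s(\Omega)}^s=s\int_0^N\alpha^{s-1}d_g(\alpha)\,d\alpha-N^sd_g(N),
\end{equation*}
and (\ref{bddpartg}) then follows from the weak-type bound $d_g(\alpha)\leqslant\alpha^{-r}\|g\|_{L^{r,\infty}(\Omega)}^r$ together with $\int_0^N\alpha^{s-1-r}\,d\alpha=N^{s-r}/(s-r)$, the integral converging precisely because $s>r$. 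Symmetrically, $|g^N_+|=|g|$ on $\{|g|>N\}$ and vanishes elsewhere, so $d_{g^N_+}(\alpha)=d_g(N)$ for $0\leqslant\alpha<N$ and $d_{g^N_+}(\alpha)=d_g(\alpha)$ for $\alpha\geqslant N$, giving
\begin{equation*}
\|g^N_+\|_{L_t(\Omega)}^t=N^td_g(N)+t\int_N^\infty\alpha^{t-1}d_g(\alpha)\,d\alpha.
\end{equation*}
Bounding both terms by $d_g\leqslant\alpha^{-r}\|g\|_{L^{r,\infty}}^r$ and using $\int_N^\infty\alpha^{t-1-r}\,d\alpha=N^{t-r}/(r-t)$ (finite because $t<r$) produces $N^{t-r}\bigl(1+t/(r-t)\bigr)\|g\|_{L^{r,\infty}}^r=\tfrac{r}{r-t}N^{t-r}\|g\|_{L^{r,\infty}}^r$, which is (\ref{unbddpartg}).

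For the solenoidal decomposition on $\Omega=\mathbb R^3$ the obstruction is that $g^N_-$ and $g^N_+$ are not divergence free. I would remedy this by applying the Helmholtz--Leray projection $\mathbb P$ onto divergence free fields, setting $\bar g^N:=\mathbb P g^N_-$ and $\tilde g^N:=\mathbb P g^N_+$. Linearity together with $\mathbb P g=g$ (valid since $\operatorname{div}g=0$) gives $g=\bar g^N+\tilde g^N$. As $\mathbb P$ is a Calder\'on--Zygmund operator, it is bounded on $L_p(\mathbb R^3)$ for every $1<p<\infty$; applied with $p=s$ and $p=t$ (so one uses $1<t$ and, for the bounded part, $s<\infty$) it converts (\ref{bddpartg}) and (\ref{unbddpartg}) into (\ref{g1divfree}) and (\ref{g2divfree}), with $C$ a power of the relevant operator norm and the nonnegative term $-N^sd_g(N)$ simply dropped. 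Finally, the range of $\mathbb P$ on $L_p(\mathbb R^3)$ is the solenoidal subspace, which for $1<p<\infty$ coincides with the closure $[C_{0,0}^\infty(\mathbb R^3)]^{L_p(\mathbb R^3)}$; this places $\bar g^N$ and $\tilde g^N$ in the asserted spaces.

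The genuinely delicate point is this last step, not the elementary layer-cake computation: one must invoke boundedness of the Leray projection, which fails at the endpoints $p=1,\infty$ and therefore forces the hypotheses $1<t$ and (for the bounded part) $s<\infty$, and one must identify its range with the closure of $C_{0,0}^\infty$ via the Helmholtz decomposition $L_p=L_p^\sigma\oplus\{\nabla p\}$. The restriction to $\Omega=\mathbb R^3$ in the ``moreover'' part is precisely because the projection and its $L_p$-theory are cleanest on the whole space, whereas the scalar bounds (\ref{bddpartg})--(\ref{unbddpartg}) hold on any measurable $\Omega$.
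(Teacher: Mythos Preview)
Your proof is correct and follows the standard route: layer-cake for the two truncation estimates, then the Helmholtz--Leray projection to restore the solenoidal condition and land in the closures $[C^\infty_{0,0}]^{L_p}$. The paper does not give its own proof but defers to \cite{BarkerSer} (and notes the analogue in Calder\'on \cite{Calderon90}); the argument there is essentially the one you have written, and Remark~\ref{remarktoLemma2.1} in the paper, which reads off $L^{r,\infty}$ control on $\bar g^N,\tilde g^N$ ``from the proof of the second part,'' is immediate from your construction since $|g^N_\pm|\leqslant|g|$ and $\mathbb P$ is bounded on $L^{r,\infty}$ by interpolation.
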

\begin{remark}\label{remarktoLemma2.1}
Looking at the proof of the second part of of Lemma 2, we can easily see that
\begin{equation}\label{boundsforcomponents}
\|\bar g^{N}\|_{L^{r,\infty}}+\|\tilde g^{N}\|_{L^{r,\infty}}\leq c(r) \|g\|_{L^{r,\infty}}.\end{equation}\end{remark}


Let us recall the well known properties of $L^{s,1}$, for $1<s<\infty$, such as separability and density of smooth compactly supported functions. Also, recall that 
$$(L^{s,1})' =L^{s',\infty}, \qquad s'=\frac s{s-1}.$$
The identification is as follows, if $f\in L^{s',\infty}$ and $g\in L^{s,1}$:
$$T_f(g)=\int\limits_{\mathbb R^3} fgdx.$$

The following proposition concerns weak-star approximation of 
$L^{3,\infty}$ functions.\begin{pro}\label{weak*approx}
Let $u_{0}\in L^{3,\infty}$ be divergence free, in the sense of distributions. 
Then there exists a sequence $u^{(k)}_{0}\in C_{0,0}^{\infty}(\mathbb R^3)$ such that 
$$u^{(k)}_{0}\stackrel{*}{\rightharpoonup} u_0$$ in $L^{3,\infty}$.
\end{pro}
The proof is based on the estimates of solutions to the Neumann boundary problem in the terms of the Lorentz space $L^{\frac 32,1}$.

Now, consider the following Cauchy problem for the heat equation 
\begin{equation}\label{heat}
\partial_tu-\Delta u=0
\end{equation}
in $Q_\infty$,
\begin{equation}\label{inidata}
u(\cdot,0)=u_0(\cdot)\in L^{3,\infty}
\end{equation}
in $\mathbb R^3$.

Let us recall some known facts about solution operators of $S(t)$ for the corresponding 
semi-group. Indeed, $u(\cdot,t)=V(\cdot,t)=S(t)u_0(\cdot)$. 
\begin{pro}\label{semigroupweakL3}
We have 
\begin{equation}\label{L3inftybdd}
\|S(t)u_{0}\|_{L^{3,\infty}}\leqslant C\|u_{0}\|_{L^{3,\infty}}.
\end{equation}
Moreover for $3<r<\infty$, $m,k\in \mathbb{N}$:
\begin{equation}\label{semigroupsolonnikovest}
\|\partial^m_t\nabla^k S(t)u_0\|_{L_{r}}\leqslant \frac{C\|u_{0}\|_{L^{3,\infty}}}{t^{m+\frac k 2+{\frac 3 2}(\frac{1}{3}-\frac{1}{r})}}.
\end{equation}
Furthermore for $1\leqslant q<3$ the following limits exist as $t\rightarrow 0$:
\begin{equation}\label{converginitialdataLq}
\|S(t)u_{0}-u_{0}\|_{L_{q,unif}}\rightarrow 0,
\end{equation}
\begin{equation}\label{weak*converg}
S(t)u_{0}\stackrel{*}{\rightharpoonup} u_0 
\end{equation}
in $L^{3,\infty}$.
Under the additional constraint that $u_0\in \mathbb{L}^{3,\infty}:=[C^\infty_{0,0}]^{L^{3,\infty}}$ Then  we have that $S(t)u_{0}\in\mathbb{L}^{3,\infty}$ and 
\begin{equation}\label{strongconvergLL}
\lim_{t\rightarrow 0}\|S(t)u_{0}-u_{0}\|_{L^{3,\infty}}=0.
\end{equation}

\end{pro}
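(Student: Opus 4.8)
The plan is to treat the five assertions in turn, exploiting throughout that $S(t)$ is the convolution operator $S(t)u_0=\Gamma(\cdot,t)*u_0$ with the heat kernel, which is nonnegative, divergence-preserving (the kernel being scalar, $\mathrm{div}\,(\Gamma(\cdot,t)*u_0)=\Gamma(\cdot,t)*\mathrm{div}\,u_0=0$), and normalised by $\|\Gamma(\cdot,t)\|_{L_1}=1$. For the boundedness \eqref{L3inftybdd} I would invoke the Young--O'Neil convolution inequality in Lorentz spaces with $\Gamma(\cdot,t)\in L_1=L^{1,1}$ and $u_0\in L^{3,\infty}$: the exponent relation $1+\tfrac13=1+\tfrac13$ and the index condition $\tfrac1\infty\le\tfrac11+\tfrac1\infty$ give $\|\Gamma(\cdot,t)*u_0\|_{L^{3,\infty}}\le C\|\Gamma(\cdot,t)\|_{L_1}\|u_0\|_{L^{3,\infty}}$, and $\|\Gamma(\cdot,t)\|_{L_1}=1$ makes the bound uniform in $t$. (Equivalently one may use $(\Gamma*u_0)^{**}\le\|\Gamma\|_{L_1}\,u_0^{**}$ for the maximal decreasing rearrangement.)

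For the smoothing estimate \eqref{semigroupsolonnikovest} I write $\partial_t^m\nabla^k S(t)u_0=K_{m,k}(\cdot,t)*u_0$ with $K_{m,k}=\partial_t^m\nabla^k\Gamma(\cdot,t)$, a Schwartz function for $t>0$. The parabolic scaling $\Gamma(x,t)=t^{-3/2}\Gamma(x/\sqrt t,1)$ gives $\|K_{m,k}(\cdot,t)\|_{L^{\rho,1}}\le c\,t^{-m-k/2-\frac32(1-1/\rho)}$ for every $\rho\in(1,\infty)$. Choosing $\rho$ from $1+\tfrac1r=\tfrac1\rho+\tfrac13$ (so that $\rho\in(1,\tfrac32)$ for $r>3$) and applying O'Neil's inequality with target index $r$ (admissible since the kernel sits in $L^{\rho,1}$), one obtains $\|K_{m,k}(\cdot,t)*u_0\|_{L_r}\le C\|K_{m,k}(\cdot,t)\|_{L^{\rho,1}}\|u_0\|_{L^{3,\infty}}$; since the relation for $\rho$ forces $\tfrac32(1-1/\rho)=\tfrac32(\tfrac13-\tfrac1r)$, this is exactly \eqref{semigroupsolonnikovest}.

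For \eqref{converginitialdataLq} fix $1<q<3$ (the case $q=1$ then follows from $\|\cdot\|_{L_1(B)}\le c\|\cdot\|_{L_q(B)}$ on unit balls) and split $u_0=g_-^N+g_+^N$ as in Lemma \ref{Decomp}. By \eqref{unbddpartg} with $t=q$, $\|g_+^N\|_{L_q}\le CN^{1-3/q}\|u_0\|_{L^{3,\infty}}^{3/q}\to0$ as $N\to\infty$; since $S(t)$ is a contraction on $L_q$, the $g_+^N$-contribution to $\|S(t)u_0-u_0\|_{L_q}$ is at most $2\|g_+^N\|_{L_q}$, hence small uniformly in $t$. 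For the fixed remainder, \eqref{bddpartg} gives $g_-^N\in L_s$ for any $s>3$, strong continuity of the heat semigroup on $L_s$ gives $S(t)g_-^N\to g_-^N$ in $L_s$, and Hölder on unit balls (using $s>q$) upgrades this to convergence in $L_{q,\mathrm{unif}}$; an $\varepsilon/2$ argument yields \eqref{converginitialdataLq}. For \eqref{weak*converg} I use $L^{3,\infty}=(L^{3/2,1})'$, the uniform bound \eqref{L3inftybdd}, and density of $C_0^\infty$ in $L^{3/2,1}$: for $\phi\in C_0^\infty$ supported in a ball $B$, choosing $q<3$ with $q'>3/2$ gives $|\int(S(t)u_0-u_0)\phi\,dx|\le\|S(t)u_0-u_0\|_{L_q(B)}\|\phi\|_{L_{q'}}\to0$ by \eqref{converginitialdataLq}, and boundedness plus density promote this to the weak-$*$ limit.

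Finally, for \eqref{strongconvergLL}: for $\phi\in C_{0,0}^\infty$ the function $S(t)\phi$ is a divergence-free Schwartz function, hence lies in $L_3$; as $C_{0,0}^\infty$ is dense in the solenoidal subspace of $L_3$ and $\|\cdot\|_{L^{3,\infty}}\le\|\cdot\|_{L_3}$, we get $S(t)\phi\in\mathbb L^{3,\infty}$, and since $\mathbb L^{3,\infty}$ is closed and $S(t)$ is bounded, approximating a general $u_0\in\mathbb L^{3,\infty}$ by such $\phi$ shows $S(t)u_0\in\mathbb L^{3,\infty}$. For the strong convergence, given $\varepsilon>0$ pick $\phi\in C^\infty_{0,0}$ with $\|u_0-\phi\|_{L^{3,\infty}}<\varepsilon$ and estimate
\[\|S(t)u_0-u_0\|_{L^{3,\infty}}\le(C+1)\|u_0-\phi\|_{L^{3,\infty}}+\|S(t)\phi-\phi\|_{L^{3,\infty}},\]
where the last term tends to $0$ because $S(t)\phi\to\phi$ in $L_3\hookrightarrow L^{3,\infty}$, so $\limsup_{t\to0}\|S(t)u_0-u_0\|_{L^{3,\infty}}\le(C+1)\varepsilon$. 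I expect the main obstacle to be \eqref{converginitialdataLq}: because $C_0^\infty$ is \emph{not} dense in $L^{3,\infty}$, one cannot reduce to smooth data by a naive approximation, and the uniform-in-$t$ smallness of the high-amplitude tail has to be extracted quantitatively from Lemma \ref{Decomp}; this non-density is also precisely why only weak-$*$ convergence holds on all of $L^{3,\infty}$, strong convergence being recovered solely on the closed subspace $\mathbb L^{3,\infty}$.
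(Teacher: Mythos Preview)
Your proof is correct and follows essentially the same route as the paper: convolution/O'Neil inequalities for \eqref{L3inftybdd} and \eqref{semigroupsolonnikovest}, and the splitting from Lemma~\ref{Decomp} for \eqref{converginitialdataLq}, from which \eqref{weak*converg} and \eqref{strongconvergLL} are then deduced. The only cosmetic difference is that the paper fixes the cut at $N=1$ and invokes strong continuity of the heat semigroup directly on \emph{both} pieces (on $L_s$ for $\bar u_0^{1}$ and on $L_q$ for $\tilde u_0^{1}$), while you send $N\to\infty$ on the high-amplitude piece and use the contraction bound $\|S(t)g_+^N-g_+^N\|_{L_q}\le 2\|g_+^N\|_{L_q}$ instead; either variant works and the rest of your argument (including the details you supply for \eqref{weak*converg} and \eqref{strongconvergLL}, which the paper leaves implicit) is fine.
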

\begin{proof} The first two estimates are follows from convolution structure of the heat potential and the corresponding inequalities.

Recall the definition
$$\|f\|_{L_{p,unif}}:=\sup_{x_0\in\mathbb R^3}\|f\|_{L_{p}(B(x_0,1))}.$$
Now, let us focus only on proving (\ref{converginitialdataLq}), as all other statements follow from this and (\ref{L3inftybdd}).
From Lemma \ref{Decomp} we can write
\begin{equation}\label{decompu_01}
u_{0}:=\bar u_{0}^{1}+\tilde u_{0}^{1},
\end{equation} so that $$\bar u_{0}^{1}\in
 [C_{0,0}^{\infty}]^{L_{s}}\cap L^{3,\infty},\qquad \tilde u_{0}^{1}\in [C_{0,0}^{\infty}]^{L_{q}}\cap L^{3,\infty}$$with $1<q<3<s<\infty$.  It is clear that
$$\lim_{t\rightarrow 0}\|S(t)\bar u_{0}^{1}-\bar u_{0}^{1}\|_{L_{s}}=0,$$
$$\lim_{t\rightarrow 0}\|S(t)\tilde u_{0}^{1}-\tilde u_{0}^{1}\|_{L_{q}}=0.$$
From here, (\ref{converginitialdataLq}) is obtained without difficulty.
\end{proof}
\begin{pro}\label{semigroupweak*stabilitypro}
Let 
$$u_{0}^{(k)}\stackrel{*}{\rightharpoonup} u_0 $$
in $L^{3,\infty}$.
Then, for any $\phi\in C_{0}^{\infty}(Q_\infty)$:
\begin{equation}\label{semiweak*stabstatement}
\int\limits_0^\infty\int\limits_{\mathbb R^3} S(t)u_{0}^{(k)}(x)\phi(x,t) dx dt\rightarrow\int\limits_0^\infty\int\limits_{\mathbb R^3} S(t)u_{0}(x)\phi(x,t) dx dt.
\end{equation}
\end{pro}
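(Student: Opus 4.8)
The plan is to convert the space--time pairing of $\phi$ against $S(t)u_0^{(k)}$ into a single \emph{spatial} pairing of $u_0^{(k)}$ against one fixed function built out of $\phi$, and then to apply the weak-$*$ convergence through the duality $(L^{3/2,1})'=L^{3,\infty}$ recalled above. Using the convolution representation $S(t)u_0^{(k)}(x)=\int_{\mathbb R^3}\Gamma(x-y,t)u_0^{(k)}(y)\,dy$ together with Fubini's theorem, I would write
\begin{equation}
\int_0^\infty\int_{\mathbb R^3} S(t)u_0^{(k)}(x)\,\phi(x,t)\,dx\,dt=\int_{\mathbb R^3} u_0^{(k)}(y)\,\Psi(y)\,dy,
\end{equation}
where, since $\Gamma$ is even in its spatial variable,
\begin{equation}
\Psi(y):=\int_0^\infty\int_{\mathbb R^3}\Gamma(x-y,t)\phi(x,t)\,dx\,dt=\int_0^\infty\big(S(t)\phi(\cdot,t)\big)(y)\,dt.
\end{equation}
Because $\phi\in C_0^\infty(Q_\infty)$, its support lies in $\{|x|\le R\}\times[t_1,t_2]$ with $0<t_1\le t_2<\infty$, so the $t$-integral is over a compact interval bounded away from the kernel singularity at $t=0$; replacing $\phi$ by $|\phi|$ yields a function with the same qualitative bounds established below, which pairs finitely with $|u_0^{(k)}|\in L^{3,\infty}$, legitimising the Fubini interchange.

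The key step is to verify that $\Psi\in L^{3/2,1}$, i.e.\ that $\Psi$ belongs to the predual of $L^{3,\infty}$. Here I would exploit the Gaussian decay of the heat kernel: for $|y|\ge 2R$ and $t\in[t_1,t_2]$ one has $|x-y|\ge |y|-R$ on the support of $\phi$, whence
\begin{equation}
|\Psi(y)|\le\int_{t_1}^{t_2}(4\pi t)^{-3/2}\Big(\int_{|x|\le R}|\phi(x,t)|\,dx\Big)e^{-(|y|-R)^2/(4t_2)}\,dt\le Ce^{-c|y|^2}
\end{equation}
for suitable constants $c,C>0$, while for $|y|\le 2R$ the function $\Psi$ is bounded, since $S(t)\phi(\cdot,t)$ is uniformly bounded on the compact $t$-interval. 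Hence $\Psi\in L^1\cap L^\infty$, and this already forces $\Psi\in L^{3/2,1}$: using the Chebyshev bound $d_\Psi(\alpha)\le\|\Psi\|_{L^1}/\alpha$ one gets $d_\Psi(\alpha)^{2/3}\le C\alpha^{-2/3}$, which is integrable near $\alpha=0$, while $d_\Psi(\alpha)=0$ for $\alpha>\|\Psi\|_{L^\infty}$, so that $\|\Psi\|_{L^{3/2,1}}=\tfrac32\int_0^\infty d_\Psi(\alpha)^{2/3}\,d\alpha<\infty$.

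Since $\Psi$ is a fixed element of $L^{3/2,1}$ and $u_0^{(k)}\stackrel{*}{\rightharpoonup}u_0$ in $L^{3,\infty}=(L^{3/2,1})'$, the definition of weak-$*$ convergence gives $\int_{\mathbb R^3}u_0^{(k)}\Psi\,dy\to\int_{\mathbb R^3}u_0\Psi\,dy$, and applying the same Fubini identity with $u_0$ in place of $u_0^{(k)}$ rewrites the limit as $\int_0^\infty\int_{\mathbb R^3}S(t)u_0(x)\phi(x,t)\,dx\,dt$, which is precisely (\ref{semiweak*stabstatement}); the vector-valued case is handled componentwise. The only genuinely non-formal point, and the step I expect to require the most care, is the membership $\Psi\in L^{3/2,1}$, i.e.\ confirming that the predual pairing really applies; the Gaussian tail of the heat kernel together with the fact that $\phi$ is supported away from $t=0$ make this routine rather than delicate.
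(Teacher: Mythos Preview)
Your proof is correct and takes a genuinely different route from the paper. The paper does not dualise the semigroup onto $\phi$; instead it invokes the decomposition Lemma \ref{Decomp} with $N=1$ to split each $u_0^{(k)}$ as $\bar u_0^{(k)1}+\tilde u_0^{(k)1}$ with uniform bounds in $L_s$ and $L_q$ ($1<q<3<s$), identifies weak limits $\bar u_0$, $\tilde u_0$ in these reflexive Lebesgue spaces, and passes to the limit in each piece separately. Your approach is more direct: by pushing the heat kernel onto $\phi$ via Fubini you produce a single fixed $\Psi\in L^1\cap L^\infty\subset L^{3/2,1}$, and then the conclusion is literally the definition of weak-$*$ convergence through the duality $(L^{3/2,1})'=L^{3,\infty}$ already recorded in the paper. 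This avoids the splitting machinery entirely and makes transparent why no subsequence extraction is needed. The paper's route, by contrast, reuses Lemma \ref{Decomp}, which is central elsewhere in the argument, so there is some economy in not introducing a separate technique; but as a stand-alone proof of this proposition your duality argument is shorter and conceptually cleaner.
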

\begin{proof}
By Lemma \ref{Decomp}, we have
$$
u_{0}^{(k)}:=\bar u_{0}^{(k)1}+\tilde u_{0}^{(k)1}
$$
and 
$$\sup_{k}\|\bar u_{0}^{(k)1}\|_{L_s}+\sup_{k}\|\tilde u_{0}^{(k)1}\|_{L_q}\leqslant C(s,q)\sup_{k}\|u_0^{(k)}\|_{L^{3,\infty}}.$$
It is clear that $\bar u_{0}^{(k )1}\rightharpoonup \bar u_0$, $S(t)\bar u_{0}^{(k)1} \rightharpoonup S(t)\bar u_0$ in $L_{s}$ and $\tilde u_{0}^{(k )1}\rightharpoonup \tilde u_0$, $S(t)\tilde u_{0}^{(k)1} \rightharpoonup S(t)\tilde u_0$ in $L_{q}$.
Obviously, $u_{0}= \bar u_0+\tilde u_0$. From here the conclusion is easily reached. 
\end{proof}

\setcounter{equation}{0}
\section{ Existence of global weak $L^{3,\infty}(\mathbb R^3)$-solutions }
\subsection{Apriori estimates }

Let $L_{s,l}(Q_T)$, $W^{1,0}_{s,l}(Q_T)$, 
$W^{2,1}_{s,l}(Q_T)$ be anisotropic 
(or parabolic) Lebesgues and Sobolev spaces 
with  norms
$$\|u\|_{L_{s,l}(Q_T)}=\Big(\int\limits_0^T\|u(\cdot,t)\|_{L_s}^ldt\Big)^\frac 1l,\quad \|u\|_{W^{1,0}_{s,l}(Q_T)}=\|u\|_{L_{s,l}(Q_T)}+\|\nabla u\|_{L_{s,l}(Q_T)},$$
$$\|u\|_{W^{2,1}_{s,l}(Q_T)}=\|u\|_{L_{s,l}(Q_T)}+\|\nabla u\|_{L_{s,l}(Q_T)}+\|\nabla^2 u\|_{L_{s,l}(Q_T)}+\|\partial_tu\|_{L_{s,l}(Q_T)}.$$
\begin{lemma}\label{integabilitynonlinearity}
Assume that $u\in L_{\infty}(0,T; J)\cap L_{2}(0,T;\stackrel{\circ}J{^1_2})$  and let $u_0\in L^{3,\infty}$ be divergence free. 
 Then
\begin{equation}\label{nonlinestsemigroup}
V\cdot\nabla V\in L_{\frac{11}{7}}(Q_T),
\end{equation}
\begin{equation}\label{nonlinest}
V\cdot\nabla u+u\cdot\nabla V\in L_{\frac{5}{4},\frac{3}{2}}(Q_T),
\end{equation}
\begin{equation}\label{ONeilest}
V\otimes u:\nabla u
\in L_1(Q_T).
\end{equation}
\end{lemma}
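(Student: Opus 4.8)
The plan is to treat the three inclusions separately, using Hölder's inequality in space together with the smoothing estimates (\ref{semigroupsolonnikovest}) for the first two, and O'Neil's product inequality in Lorentz spaces for the third. Throughout I abbreviate $\gamma_0(r):=\frac32(\frac13-\frac1r)$ and $\gamma_1(r):=\frac12+\gamma_0(r)$, so that (\ref{semigroupsolonnikovest}) gives $\|V(\cdot,t)\|_{L_r}\le Ct^{-\gamma_0(r)}\|u_0\|_{L^{3,\infty}}$ and $\|\nabla V(\cdot,t)\|_{L_r}\le Ct^{-\gamma_1(r)}\|u_0\|_{L^{3,\infty}}$ for every $3<r<\infty$. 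Since $T<\infty$, the only issue is integrability of these time singularities near $t=0$.

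For (\ref{nonlinestsemigroup}) I would apply Hölder with exponents $a,b>3$ satisfying $\frac1a+\frac1b=\frac{7}{11}$ (for instance $a=b=\frac{22}{7}$) to obtain $\|V\cdot\nabla V\|_{L_{11/7}}\le\|V\|_{L_a}\|\nabla V\|_{L_b}\le Ct^{-(\gamma_0(a)+\gamma_1(b))}$. A short computation shows $\gamma_0(a)+\gamma_1(b)=\frac32-\frac32(\frac1a+\frac1b)=\frac{6}{11}$ independently of the admissible split, so raising to the power $\frac{11}{7}$ produces the singularity $t^{-6/7}$, which is integrable on $(0,T)$ since $\frac67<1$. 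This gives (\ref{nonlinestsemigroup}).

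For (\ref{nonlinest}) I would estimate the two terms separately. For $V\cdot\nabla u$, Hölder with $\|V\|_{L_{10/3}}$ and $\|\nabla u\|_{L_2}$ yields $\|V\cdot\nabla u\|_{L_{5/4}}\le Ct^{-1/20}\|\nabla u(\cdot,t)\|_{L_2}$; raising to the power $\frac32$ and applying Hölder in time with conjugate exponents $(4,\frac43)$ bounds $\int_0^T t^{-3/40}\|\nabla u\|_{L_2}^{3/2}\,dt$ by $(\int_0^T t^{-3/10}\,dt)^{1/4}\|\nabla u\|_{L_2(Q_T)}^{3/2}<\infty$. For $u\cdot\nabla V$, Hölder with $\|u\|_{L_2}$ and $\|\nabla V\|_{L_{10/3}}$ gives $\|u\cdot\nabla V\|_{L_{5/4}}\le C\|u(\cdot,t)\|_{L_2}\,t^{-11/20}$, whence $\int_0^T\|u\cdot\nabla V\|_{L_{5/4}}^{3/2}\,dt\le C(\sup_t\|u\|_{L_2})^{3/2}\int_0^T t^{-33/40}\,dt<\infty$, using $\frac{33}{40}<1$ and $u\in L_\infty(0,T;J)$. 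Combining the two bounds yields (\ref{nonlinest}).

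The main point is (\ref{ONeilest}), where the naive Hölder split $L_3\cdot L_6\cdot L_2$ fails because $V$ only lies in the weak space $L^{3,\infty}$. Here I would use the sharp Sobolev embedding $\stackrel{\circ}J{^1_2}\hookrightarrow L^{6,2}$, giving $\|u(\cdot,t)\|_{L^{6,2}}\le C\|\nabla u(\cdot,t)\|_{L_2}$, together with O'Neil's inequality for products in Lorentz spaces: since $\frac12=\frac13+\frac16$ and $\frac12=\frac1\infty+\frac12$, one has $\|Vu\|_{L_2}=\|Vu\|_{L^{2,2}}\le C\|V\|_{L^{3,\infty}}\|u\|_{L^{6,2}}$. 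By (\ref{L3inftybdd}), $\|V(\cdot,t)\|_{L^{3,\infty}}\le C\|u_0\|_{L^{3,\infty}}$ uniformly in $t$, so a final Hölder in space gives $\|V\otimes u:\nabla u\|_{L_1(\mathbb R^3)}\le\|Vu\|_{L_2}\|\nabla u\|_{L_2}\le C\|u_0\|_{L^{3,\infty}}\|\nabla u(\cdot,t)\|_{L_2}^2$. Integrating in time and using $\nabla u\in L_2(Q_T)$ delivers (\ref{ONeilest}). I expect the verification of this O'Neil bound — and the replacement of the endpoint embedding $\stackrel{\circ}J{^1_2}\hookrightarrow L_6$ by its Lorentz refinement into $L^{6,2}$ — to be the only genuinely non-routine step.
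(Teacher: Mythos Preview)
Your argument is correct and follows the paper's proof essentially line by line: the same H\"older splits with $a=b=22/7$ for (\ref{nonlinestsemigroup}) and with $L_{10/3}$--$L_2$ for both terms in (\ref{nonlinest}), and the same O'Neil product estimate $L^{3,\infty}\cdot L^{6,2}\hookrightarrow L_2$ combined with the Lorentz--Sobolev embedding $\stackrel{\circ}J{^1_2}\hookrightarrow L^{6,2}$ for (\ref{ONeilest}). The only cosmetic difference is that you state the general observation $\gamma_0(a)+\gamma_1(b)=\tfrac{6}{11}$ for any admissible split, while the paper just computes with the symmetric choice.
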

\begin{proof}
By Holder inequality and Proposition \ref{semigroupweakL3}:
$$\int\limits_{\mathbb R^3} |V\cdot\nabla V|^{\frac{11}{7}}dx\leqslant \|V\|_{L_{\frac{22}{7}}}^{\frac{11}{7}}\|\nabla V\|_{L_{\frac{22}{7}}}^{\frac{11}{7}}\leqslant$$$$\leqslant c\frac{\|u_{0}\|_{L^{3,\infty}}^{\frac{22}{7}}}{t^{\frac 6 7}}.$$
From here, (\ref{nonlinestsemigroup}) is easily established.
 Again, by Holder inequality and Proposition \ref{semigroupweakL3}:
 $$\|u\cdot\nabla V\|_{L_{\frac{5}{4}}}\leqslant \|\nabla V\|_{L_{\frac{10}{3}}}\|u\|_{L_{2}}\leqslant c\frac{\|u_0\|_{L^{3,\infty}}\|u\|_{L_{2,\infty}(Q_T)}}{t^{\frac{11}{20}}}.$$
 From this it is immediate that $u\cdot\nabla V\in L_{\frac{5}{4},\frac{3}{2}}(Q_T).$
 Again by Holder inequality, it is not difficult to verify
 $$\int\limits_0^T\|V\cdot\nabla u\|_{L_{\frac{5}{4}}}^{\frac{3}{2}}dt\leqslant (\int\limits_0^T \|\nabla u\|_{L_2}^2 dt)^{\frac{3}{4}} (\int\limits_0^T \|V\|_{L_{\frac{10}{3}}}^6 dt)^{\frac{1}{4}}.$$
 The conclusion is easily reached by noting that Proposition \ref{semigroupweakL3} gives:
$$\|V\|_{L_{\frac{10}{3}}}^6\leqslant c\frac{\|u_0\|_{L^{3,\infty}}}{t^{\frac{6}{20}}}.$$
The last estimate is known and shows why there are difficulties to prove energy estimate for $u$. 
By O'Neil's inequality and Proposition \ref{semigroupweakL3}:
$$\int\limits_{\mathbb R^3}| V\otimes u:\nabla u |dx\leqslant \|V\|_{L^{3,\infty}}\|  u\|_{L^{6,2}}\|\nabla u\|_{L_2}\leqslant$$$$
\leqslant c\|u_0\|_{L^{3,\infty}}\|\nabla u\|_{L_2}^2
.$$
We have used 
fact that
$L^{6,2}(\Omega)\hookrightarrow W^1_2(\Omega)$. See \cite{Adams}, for example.
\end{proof}

The next statement is a direct consequence of Lemma \ref{integabilitynonlinearity} and coercive estimates of solutions to the Stokes problem.
\begin{lemma}\label{higherderivativeandpressure}
Let $v$ be a global weak $L^{3,\infty}$-solution with functions $u$ and $q$ as in Definition \ref{globalL3inf}. Then
\begin{equation}\label{vdecomp}
(u,q)=\sum_{i=1}^3(u^i,p_{i})
\end{equation}
such that for any finite $T$: \begin{equation}\label{vdecompspaces}
(u^i,\nabla p_{i})\in W^{2,1}_{s_i,l_i}(Q_T)\times L_{s_i,l_i}(Q_T)
\end{equation} and
\begin{equation}\label{integindices}
(s_1,l_1)=( 9/8,3/2), s_2=l_2=11/7, (s_3,l_3)=(5/4,3/2).
\end{equation}
In addition $(u^i, p_{i})$ satisfy the following:
\begin{equation}\label{v_1eqn}
\partial_t u^1-\Delta u^1+\nabla p_{1}= -u\cdot\nabla u,
\end{equation}
\begin{equation}\label{v_2eqn}
\partial_t u^2-\Delta u^2+\nabla p_{2}= -V\cdot\nabla V,
\end{equation}
\begin{equation}\label{v_3eqn}
\partial_t u^3-\Delta u^3+\nabla p_{3}= -V\cdot\nabla u-u\cdot\nabla V
\end{equation}
in $Q_\infty$, and 
\begin{equation}\label{freediv}
{\rm div}\,u^i=0
\end{equation}
in $Q_\infty$ for $i=1,2,3$,
\begin{equation}\label{partinitial}
u^i(\cdot,0)=0
\end{equation}
for all $x\in \mathbb R^3$ and $i=1,2,3$.
\end{lemma}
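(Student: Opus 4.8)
The plan is to read the three individual Stokes problems directly off the perturbed system \eqref{perturbdirectsystem}, to solve each of them by the coercive (Solonnikov-type maximal-regularity) theory for the non-stationary Stokes system, and then to identify the sum of the pieces with the original pair $(u,q)$. Since $v=V+u$, the nonlinearity expands as
\[
v\cdot\nabla v = u\cdot\nabla u + V\cdot\nabla V + (V\cdot\nabla u + u\cdot\nabla V),
\]
so the forcing of \eqref{perturbdirectsystem} splits into exactly the three groups appearing on the right-hand sides of \eqref{v_1eqn}--\eqref{v_3eqn}. For each group $f_1=-u\cdot\nabla u$, $f_2=-V\cdot\nabla V$, $f_3=-(V\cdot\nabla u + u\cdot\nabla V)$ I would solve the linear problem
\[
\partial_t u^i-\Delta u^i+\nabla p_i = f_i,\qquad {\rm div}\,u^i=0,\qquad u^i(\cdot,0)=0
\]
in $Q_\infty$, which directly yields \eqref{v_1eqn}--\eqref{v_3eqn}, \eqref{freediv} and \eqref{partinitial}.

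The integrability of the forcings fixes the indices \eqref{integindices}. For $f_2$ and $f_3$ the memberships $f_2\in L_{11/7}(Q_T)$ and $f_3\in L_{5/4,3/2}(Q_T)$ are precisely \eqref{nonlinestsemigroup} and \eqref{nonlinest} of Lemma \ref{integabilitynonlinearity}, giving $s_2=l_2=11/7$ and $(s_3,l_3)=(5/4,3/2)$. The genuinely nonlinear term $f_1=-u\cdot\nabla u$ is \emph{not} covered by Lemma \ref{integabilitynonlinearity} and needs its own estimate: from $u\in L_{\infty}(0,T;J)\cap L_{2}(0,T;\stackrel{\circ}J{^1_2})$ and the embedding $\stackrel{\circ}J{^1_2}\hookrightarrow L_6$, interpolation along the scale $\tfrac2p+\tfrac3q=\tfrac32$ gives $u\in L_{6}(0,T;L_{18/7})$, while $\nabla u\in L_2(Q_T)$. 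Hölder in space (with $\tfrac{7}{18}+\tfrac12=\tfrac89$) followed by Hölder in time then yields $u\cdot\nabla u\in L_{9/8,3/2}(Q_T)$, which is exactly $(s_1,l_1)=(9/8,3/2)$.

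With $f_i\in L_{s_i,l_i}(Q_T)$ and every index lying in $(1,\infty)$, I would invoke the coercive estimates for the non-stationary Stokes system to obtain, for each $i$, a unique solution with $(u^i,\nabla p_i)\in W^{2,1}_{s_i,l_i}(Q_T)\times L_{s_i,l_i}(Q_T)$ and vanishing initial data, establishing \eqref{vdecomp}, \eqref{vdecompspaces}. It remains to prove the identification $(u,q)=\sum_{i=1}^3(u^i,p_i)$. Setting $w=\sum_i u^i$ and $\bar p=\sum_i p_i$, the pair $(w,\bar p)$ solves the Stokes system with forcing $-v\cdot\nabla v$ and zero initial data; by Remark \ref{initialdata} the original pair $(u,q)$ solves the same problem. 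Hence the difference solves the homogeneous non-stationary Stokes system with zero initial data, and one concludes $u=w$, $\nabla q=\nabla\bar p$.

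The main obstacle I expect is this last identification step rather than the integrability bookkeeping. The difficulty is that $u$ is only known a priori in the energy class and satisfies \eqref{perturbdirectsystem} in the sense of distributions, whereas the $u^i$ are maximal-regularity solutions in the much stronger $W^{2,1}_{s_i,l_i}$ classes; to equate $u$ with $w$ one needs a uniqueness theorem for the non-stationary Stokes system valid across these different function classes, i.e. that the only distributional solution of the homogeneous Stokes system with zero initial data is the trivial one. Establishing this rigorously (e.g. by testing the difference against suitable solenoidal test functions, or by regularising and passing to the limit) is the technical heart of the argument; the remaining estimates follow routinely from Lemma \ref{integabilitynonlinearity} and the cited coercive estimates.
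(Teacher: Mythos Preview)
Your proposal is correct and follows exactly the approach the paper takes: the paper states the lemma as a direct consequence of Lemma~\ref{integabilitynonlinearity} together with the coercive (Solonnikov-type) estimates for the non-stationary Stokes problem, and your derivation of $u\cdot\nabla u\in L_{9/8,3/2}(Q_T)$ is precisely the estimate the paper uses later (cf.\ \eqref{v^k9/83/2bdd}). The identification step you single out as the ``main obstacle'' is in fact routine---uniqueness of distributional, finite-energy solutions to the homogeneous Stokes system with zero initial data---and the paper simply takes it for granted.
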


Before the next Lemma let us introduce some notation. Let $u,\,v$ and $u_0$ be as in Definition \ref{globalL3inf}. Let $u_0= \bar u^{N}_0+\tilde u^{N}_0$ denote the splitting from Lemma \ref{Decomp}. Let us define the following: 
 \begin{equation}\label{V>N}
 \bar V^{N}(\cdot,t):=S(t)\bar u^{N}_0(\cdot,t),
 \end{equation}
 \begin{equation}\label{V<N}
 \tilde V^{N}(\cdot,t):=S(t)\tilde u^{N}_0(\cdot,t)
 \end{equation}
 and
 \begin{equation}\label{w>N}
 w^{N}(x,t):=u(x,t)+\tilde V^{N}(x,t).
 \end{equation}
\begin{lemma}\label{energyinequalitysplitting} 
 In the above notation, we have the following global energy inequality 
 \begin{equation}\label{w>Nenergyineq}
\|w^N(\cdot,t)\|_{L_2}^2+2\int\limits_0^t\int\limits_{\mathbb R^3} |\nabla w^N(x,t')|^2 dxdt'\leqslant$$$$\leqslant \|\tilde u_0^N\|_{L_2}^2+ 
2 \int_0^t\int\limits_{\mathbb R^3}(\bar V^N\otimes w^N+\bar V^N\otimes \bar V^N):\nabla w^N dxdt'
\end{equation}
that is valid for positive $N$ and $t$.
\end{lemma}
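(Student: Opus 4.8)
The plan is to obtain (\ref{w>Nenergyineq}) by adding three balances: the energy inequality (\ref{venergyineq}) for $u$, the exact parabolic energy identity for the heat flow $\tilde V^N$, and a cross-term identity obtained by pairing the two evolution equations. First I record the algebraic structure behind the notation. Since $S(t)$ is linear, $V=\bar V^N+\tilde V^N$, and hence $v=V+u=\bar V^N+w^N$ with $w^N=u+\tilde V^N$; moreover ${\rm div}\,w^N=0$, and Lemma \ref{Decomp} applied with exponent $2$ (legitimate since $2<3$) gives $\tilde u_0^N\in L_2$ with $\|\tilde u_0^N\|_{L_2}^2\leq CN^{-1}\|u_0\|_{L^{3,\infty}}^{3}$. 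In particular $\tilde V^N$ has finite energy on $Q_t$, so the classical heat energy identity
\begin{equation}
\|\tilde V^N(\cdot,t)\|_{L_2}^2+2\int_0^t\int_{\mathbb R^3}|\nabla\tilde V^N|^2\,dx\,dt'=\|\tilde u_0^N\|_{L_2}^2
\end{equation}
holds, and $w^N(\cdot,0)=\tilde V^N(\cdot,0)=\tilde u_0^N$ by Remark \ref{initialdata}.

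\textbf{Cross term and assembly.} Testing the heat equation for $\tilde V^N$ against $u$ and the perturbed system (\ref{perturbdirectsystem}) against $\tilde V^N$, and using ${\rm div}\,\tilde V^N=0$ to discard the pressure, I expect
\begin{equation}
\int_{\mathbb R^3}u(\cdot,t)\cdot\tilde V^N(\cdot,t)\,dx=-2\int_0^t\int_{\mathbb R^3}\nabla u:\nabla\tilde V^N\,dx\,dt'-\int_0^t\int_{\mathbb R^3}(v\cdot\nabla v)\cdot\tilde V^N\,dx\,dt',
\end{equation}
the boundary contribution at $t=0$ vanishing because $u(\cdot,0)=0$. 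Adding (\ref{venergyineq}), the heat identity, and twice this relation, and transferring the mixed Dirichlet term $4\int_0^t\int\nabla u:\nabla\tilde V^N$ to the left, the left-hand side assembles into $\|w^N(\cdot,t)\|_{L_2}^2+2\int_0^t\|\nabla w^N\|_{L_2}^2$ by completing the square in both the $L_2$ norm and the gradient. On the right, $\|\tilde u_0^N\|_{L_2}^2$ survives; and since integration by parts together with the divergence-free conditions gives $\int(V\otimes u+V\otimes V):\nabla u=-\int(v\cdot\nabla v)\cdot u$ (the term $\int v\cdot\nabla u\cdot u$ dropping out), the quadratic work combines with the cross term $-\int(v\cdot\nabla v)\cdot\tilde V^N$ into exactly $-2\int_0^t\int(v\cdot\nabla v)\cdot w^N$.

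\textbf{Final expansion.} It then remains to expand $v=\bar V^N+w^N$ in $-2\int_0^t\int(v\cdot\nabla v)\cdot w^N$. The contributions $\int w^N\cdot\nabla w^N\cdot w^N$ and $\int\bar V^N\cdot\nabla w^N\cdot w^N$ vanish since ${\rm div}\,w^N={\rm div}\,\bar V^N=0$, while integrating by parts in the surviving terms converts $-\int w^N\cdot\nabla\bar V^N\cdot w^N$ into $\int(\bar V^N\otimes w^N):\nabla w^N$ and $-\int\bar V^N\cdot\nabla\bar V^N\cdot w^N$ into $\int(\bar V^N\otimes\bar V^N):\nabla w^N$. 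This reproduces (\ref{w>Nenergyineq}).

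\textbf{Main obstacle.} The analytic content lies entirely in justifying the cross-term identity. Because $u$ is only a weak solution, $\partial_t u$ exists merely in a low-order distributional sense (cf. Lemma \ref{higherderivativeandpressure}), and $\tilde V^N$, although smooth and rapidly decaying for $t>0$, is only $L_2$ at $t=0$; hence the product rule $\frac{d}{dt}\int u\cdot\tilde V^N=\langle\partial_t u,\tilde V^N\rangle+\langle u,\partial_t\tilde V^N\rangle$ must be validated by mollification in time together with spatial cutoffs and a passage to the limit $t'\downarrow 0$. The integrability needed for $\int_0^t\int(v\cdot\nabla v)\cdot\tilde V^N$ to be absolutely convergent up to $t=0$ is supplied by Lemma \ref{integabilitynonlinearity}, which controls $V\cdot\nabla V$, $V\cdot\nabla u+u\cdot\nabla V$ and $u\cdot\nabla u$ in the relevant anisotropic Lebesgue spaces, paired against the good bounds for $\tilde V^N$. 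Crucially, unlike the self-energy of $u$, this bilinear pairing with the fixed smooth function $\tilde V^N$ carries no defect measure, so the cross-term relation is an exact identity and the single inequality of (\ref{venergyineq}) is inherited by $w^N$ with the correct sign.
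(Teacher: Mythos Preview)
Your algebraic route is correct and is genuinely different from the paper's. The paper does not build (\ref{w>Nenergyineq}) out of the global inequality (\ref{venergyineq}); instead it starts from the \emph{local} energy inequality (\ref{localenergyinequality}) for the pair $(v,q)$, rewrites $v=\bar V^N+w^N$ to obtain a localized energy inequality for $w^N$ (equation (\ref{localenergyinequalityW>N})), and only then removes the spatial cut-off $\phi_R$ and finally the time cut-off $\phi_1$. The analytic heart of the paper's argument is the vanishing, as $R\to\infty$, of the pressure term $\int_0^t\!\int q\,w^N\!\cdot\nabla\phi_R\,\phi_1$, which is forced through the decomposition $q=p_1+p_2+p_3$ of Lemma~\ref{higherderivativeandpressure} combined with Poincar\'e (see (\ref{pv1termLEI})--(\ref{pv3termLEI})).

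Your approach is tidier on the algebraic side: starting from (\ref{venergyineq}), adding the heat identity for $\tilde V^N$, and inserting the cross-term identity assembles $\|w^N\|_{L_2}^2+2\int\|\nabla w^N\|_{L_2}^2$ directly, and your expansion of $-2\int(v\cdot\nabla v)\cdot w^N$ with $v=\bar V^N+w^N$ is correct. What you should make explicit, however, is that the cross-term identity itself hides the same pressure issue. When you test (\ref{perturbdirectsystem}) against $\tilde V^N$ via spatial cut-offs $\phi_R$, the test function $\phi_R\tilde V^N$ is no longer divergence-free, so a term $\int_0^t\!\int q\,\tilde V^N\!\cdot\nabla\phi_R$ appears; showing that it vanishes as $R\to\infty$ again requires the global integrability of $\nabla p_i$ from Lemma~\ref{higherderivativeandpressure}. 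Thus your ``spatial cutoffs'' remark is exactly right, but the work it conceals is essentially the paper's estimates (\ref{pv1termLEI})--(\ref{pv3termLEI}) with $w^N$ replaced by $\tilde V^N$. In short: both proofs reduce to the same pressure-at-infinity computation; the paper packages it inside a local energy inequality for $w^N$, whereas you package it inside a bilinear identity between $u$ and $\tilde V^N$.
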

\begin{proof}
The first stage is showing that $w^N$ satisfies the local energy inequality. Let us briefly sketch how this can be done. Let $\phi\in C^\infty_0(Q_\infty)$ be a positive function. Observe that the assumptions in Definition \ref{globalL3inf} imply that the following function
\begin{equation}\label{crossterm}
t\rightarrow \int\limits_{\Omega}w^N(x,t)\cdot\bar V^N(x,t)\phi(x,t)dx
\end{equation}
is continuous for all $t\geq 0$.
It is not so difficult to show that this term has the following expression:
\begin{equation}\label{crosstermexpression1}
\int\limits_{\mathbb R^3}w^N(x,t)\cdot \bar V^N(x,t)\phi(x,t)dx= \int\limits_0^t\int\limits_{\mathbb R^3}(w^N\cdot \bar V^N)(\Delta\phi+\partial_t\phi) dxdt'-$$$$-
2\int\limits_0^t\int\limits_{\mathbb R^3}\nabla w^N:\nabla \bar V^N\phi dxdt'+\int\limits_0^t\int\limits_{\mathbb R^3} \bar V^N\cdot\nabla\phi q dxdt'+$$$$+
\frac 1 2\int\limits_0^t\int\limits_{\mathbb R^3} (|v|^2-|w^N|^2)v\cdot\nabla\phi dxdt'-$$$$-\int\limits_0^t\int\limits_{\mathbb R^3}(\bar V^N\otimes w^N+\bar V^N\otimes \bar V^N):\nabla w^N\phi dxdt'-$$$$-
\int\limits_0^t\int\limits_{\mathbb R^3}(\bar V^N\otimes \bar V^N+\bar V^N\otimes w^N):(w^N\otimes \nabla\phi)dxdt'.
\end{equation}
It is also readily shown that
\begin{equation}\label{crosstermexpression2}
\int\limits_{\mathbb R^3}|\bar V^N(x,t)|^2\phi(x,t)dx=\int\limits_0^t\int\limits_{\mathbb R^3}|\bar V^N(x,t')|^2(\Delta \phi(x,t')+\partial_t\phi(x,t'))dxdt'-$$$$-2\int\limits_0^t\int\limits_{\mathbb R^3} |\nabla\bar V^N|^2\phi dxdt'.
\end{equation}
Using (\ref{localenergyinequality}), together with (\ref{crosstermexpression1})-(\ref{crosstermexpression2}), we obtain that for all $t\in ]0,\infty[$ and for all non negative functions $\phi\in C_{0}^{\infty}(Q_\infty)$:
\begin{equation}\label{localenergyinequalityW>N}
\int\limits_{\mathbb R^3}\phi(x,t)|w^N(x,t)|^2dx+2\int\limits_{0}^t\int\limits_{\mathbb R^3}\phi |\nabla w^N|^2 dxdt^{'}\leqslant$$$$\leqslant
\int\limits_{0}^{t}\int\limits_{\mathbb R^3}|w^N|^2(\partial_{t}\phi+\Delta\phi)+2qw^N\cdot\nabla\phi dxdt^{'}+$$$$+
\int\limits_0^t\int\limits_{\mathbb R^3}|w^N|^2v\cdot\nabla\phi dxdt'+$$$$+
2\int\limits_0^t\int\limits_{\mathbb R^3} (\bar V^N\otimes \bar V^N+\bar V^N\otimes w^N):(\nabla w^N\phi+w^N\otimes\nabla\phi)dxdt'
\end{equation}
In the next part of the proof, let $\phi(x,t)=\phi_{1}(t)\phi_{R}(x)$. Here, $\phi_1\in C_{0}^{\infty}(0,\infty)$ and $\phi_{R}\in C_0^{\infty}(B(2R))$ are positive functions. Moreover, $\phi_{R}=1$ on $B(R)$, $0\leqslant\phi_{R}\leqslant 1$,
$$|\nabla \phi_R|\leqslant c/R,$$
$$|\nabla^2\phi_R|\leqslant c/R^2.$$
Since $\tilde u_0^N\in [C_{0,0}^{\infty}(\mathbb R^3)]^{L_{2}(\mathbb R^3)}$, it is obvious that for $\tilde V^N(\cdot,t):=S(t)\tilde u_0^N(\cdot,t)$ we the energy equality:
\begin{equation}\label{V>Nenergyequality}
\|\tilde V^N(\cdot,t)\|_{L_2}^2+\int\limits_0^t\int\limits_{\mathbb R^3}|\nabla \tilde V^N|^2 dxdt'=\|\tilde u_0^N\|_{L_2}^2.
\end{equation}
By semigroup estimates, we have  for $2\leqslant p\leqslant \infty$, $10/3\leqslant q\leqslant \infty$:
\begin{equation}\label{V>Nsemigroupest}
\|\tilde V^N(\cdot,t)\|_{L_p}\leqslant \frac{C(p)}{t^{\frac{3}{2}(\frac{1}{2}-\frac{1}{p})}}\|\tilde u_0^N\|_{L_2},
\end{equation}
\begin{equation}\label{V<Nsemigroupest}
\|\bar V^N(\cdot,t)\|_{L_q}\leqslant \frac{C(q)}{t^{\frac{3}{2}(\frac{3}{10}-\frac{1}{q})}}\|\bar u_0^N\|_{L_{\frac{10}{3}}}.
\end{equation}
Hence, we have $w^N\in C_{w}([0,T]; J)\cap L_{2}(0,T;\stackrel{\circ}{J}{^1_2})$.
Here, $T$ is finite and $C_{w}([0,T]; J)$ denotes continuity with respect to the weak topology.
Using these facts, and usual multiplicative inequalities, it is obvious that the following limits hold:
$$\lim_{R\rightarrow\infty}\int\limits_{\mathbb R^3}\phi_R(x)\phi_1(t)|w^N(x,t)|^2dx+2\int\limits_{0}^t\int\limits_{\mathbb R^3}\phi_R\phi_1 |\nabla w^N|^2 dxdt^{'}=$$$$=
\int\limits_{\mathbb R^3}\phi_1(t)|w^N(x,t)|^2dx+2\int\limits_{0}^t\int\limits_{\mathbb R^3}\phi_1 |\nabla w^N|^2 dxdt^{'},$$
$$\lim_{R\rightarrow\infty}\int\limits_{0}^t\int\limits_{\mathbb R^3}(|w^N|^2\partial_t\phi_{1}\phi_R+2(\bar V^N\otimes w^N+\bar V^N\otimes\bar V^N):\nabla w^N\phi_{1}\phi_{R})dxdt'=$$$$=
\int\limits_{0}^t\int\limits_{\mathbb R^3}(|w^N|^2\partial_t\phi_{1}+2(\bar V^N\otimes w^N+\bar V^N\otimes\bar V^N):\nabla w^N\phi_{1})dxdt',$$
$$\lim_{R\rightarrow\infty}\int\limits_{0}^t\int\limits_{\mathbb R^3}(|w^N|^2\phi_{1}\Delta\phi_R+\phi_1|w^N|^2 v\cdot\nabla \phi_{R}+$$$$+2\phi_1(\bar V^N\otimes w^N+\bar V^N\otimes\bar V^N):(w^N\otimes\nabla\phi_R))dxdt'=0. $$
Let us focus on the term containing the pressure, namely
$$ \int\limits_0^t\int\limits_{\mathbb R^3}qw^N\cdot\nabla\phi_R\phi_1 dxdt^{'}.$$
Define $T(R):= B(2R)\setminus B(R)$. We can instead treat
$$ \int\limits_0^t\int\limits_{T_{+}(R)}(q-[q]_{B(2R)})w^N\cdot\nabla\phi_R\phi_1 dxdt^{'}.$$
Using Poincare inequality, it is not so difficult to show:

\begin{equation}\label{pv1termLEI}
|\int\limits_0^t\int\limits_{T(R)}(p_1-[p_1]_{B(2R)})w^N\cdot\nabla\phi_R\phi_1 dxdt^{'}|\leqslant$$$$\leqslant \frac{C\|\phi_1\|_{L_{\infty}(0,t)}}{R^{\frac{2}{3}}}\|w^N\|_{L_{3}(T(R)\times ]0,t[)}\|\nabla p_1\|_{L_{\frac{9}{8},\frac{3}{2}}(Q_t)}
,
\end{equation}
\begin{equation}\label{pv2termLEI}
|\int\limits_0^t\int\limits_{T(R)}(p_2-[p_2]_{B(2R)})w^N\cdot\nabla\phi_R\phi_1 dxdt^{'}|\leqslant$$$$\leqslant C\|\phi_1\|_{L_{\infty}(0,t)}\|w^N\|_{L_{\frac{11}{4}}(T(R)\times ]0,t[)}\|\nabla p_2\|_{L_{\frac{11}{7}}(Q_t)}
,
\end{equation}
\begin{equation}\label{pv3termLEI}
|\int\limits_0^t\int\limits_{T_{+}(R)}(p_3-[p_3]_{B(2R)})(w^N\cdot\nabla\phi_R)\phi_1 dxdt^{'}|\leqslant$$$$\leqslant \frac{C\|\phi_1\|_{L_{\infty}(0,t)}}{R^{\frac{2}{5}}}\|w^N\|_{L_{3}(T(R)\times ]0,t[)}\|\nabla p_3\|_{L_{\frac{5}{4},\frac{3}{2}}(Q_t)}
.
\end{equation}
Using (\ref{pv1termLEI})-(\ref{pv3termLEI}), multiplicative inequalities and properties of the pressure decomposition in Definition \ref{globalL3inf} we infer that
$$\lim_{R\rightarrow \infty}\int\limits_0^t\int\limits_{T(R)}qw^N\cdot\nabla\phi_R\phi_1 dxdt^{'}=0.$$
Thus, putting everything together, we get for arbitrary positive function $\phi_1\in C_{0}^{\infty}(0,\infty)$:
\begin{equation}\label{w>Nenergyineqcompacttime}
\int\limits_{\mathbb R^3}\phi_1(t)|w^N(x,t)|^2dx+2\int\limits_{0}^t\int\limits_{\mathbb R^3}\phi_{1}(t) |\nabla w^N|^2 dxdt^{'}\leqslant$$$$\leqslant
\int\limits_{0}^{t}\int\limits_{\mathbb R^3}|w^N|^2\partial_{t}\phi_{1}+
2(\bar V^N\otimes w^N+\bar V^N\otimes \bar V^N):\nabla w^N\phi_1 dxdt'
\end{equation}
From Remark  \ref{initialdata}, we see that
\begin{equation}\label{w>Nconverginitialdata}
\lim_{t\rightarrow0}\|w^N(\cdot,t)-\tilde u_0^N(\cdot)\|_{L_{2}}=0.
\end{equation}
Using known arguments from \cite{BarkerSer}, we have the following estimates:
\begin{equation}\label{Barkserest1}
\int\limits_{0}^t\int\limits_{\mathbb R^3}|\bar V^N\otimes w^N:\nabla w^N|dxdt'\leqslant$$$$\leqslant
 CN^{\frac{1}{10}}\|u_0\|_{L^{3,\infty}}^{\frac{9}{10}}\left(\int\limits_{0}^t\int\limits_{\mathbb R^3} |\nabla w^N|^2dxdt'\right)^{\frac{4}{5}}\left(\int\limits_0^
t\frac{\|w^N(\cdot,\tau)\|^{2}_{L_{2}}}{\tau^{\frac{3}{4}}}d\tau\right)^{\frac{1}{5}}
,
\end{equation}
\begin{equation}\label{Barkserest2}
\int\limits_{0}^t\int\limits_{\mathbb R^3}|\bar V^N\otimes\bar V^N:\nabla w^N| dxdt'\leqslant Ct^{\frac{7}{20}}N^{\frac{1}{5}}\|u_0\|_{L^{3,\infty}}^{\frac{9}{5}}\|\nabla w^N\|_{L_2(Q_t)}.
\end{equation}
Using (\ref{w>Nenergyineqcompacttime}), (\ref{w>Nconverginitialdata}) and (\ref{Barkserest1})-(\ref{Barkserest2}), we infer (\ref{w>Nenergyineq}) by standard arguments involving an appropriate choices of $\phi_1(t)=\phi_{\epsilon}(t)$ and letting $\epsilon$ tend to zero.

\end{proof}
\begin{lemma}\label{venergyest}
Let $u$, $v$ and $u_0$ be as in Definition \ref{globalL3inf}. Then the following estimate is valid for all $N,t>0$:
\begin{equation}\label{venergybdd}
\|u(\cdot,t)\|_{L_{2}}^2+\int\limits_0^t\int\limits_{\mathbb R^3} |\nabla u|^2dxdt'\leqslant C(N^{-1}\|u_0\|_{L^{3,\infty}}^3+t^{\frac{7}{10}}N^{\frac{2}{5}}\|u_0\|_{L^{3,\infty}}^{\frac{18}{5}})+$$$$+
C\exp(Ct^{\frac{1}{4}}N^{\frac{1}{2}}\|u_0\|_{L^{3,\infty}}^{\frac{9}{2}})(N^{-\frac{1}{2}}t^{\frac{1}{4}}\|u_0\|_{L^{3,\infty}}^{\frac{33}{8}}+t^{\frac{19}{20}}N^{\frac{9}{10}}\|u_0\|_{L^{3,\infty}}^{\frac{199}{40}}).
\end{equation}
Hence, taking $N= t^{-\frac{1}{2}}$ gives the following scale invariant estimate:
\begin{equation}\label{venergybddscaled}
\|u(\cdot,t)\|_{L_{2}}^2+\int\limits_0^t\int\limits_{\mathbb{R}^3} |\nabla u|^2dxdt'\leqslant$$$$\leqslant
C t^{\frac{1}{2}}\exp(C \|u_0\|_{L^{3,\infty}}^{\frac{9}{2}})(\|u_0\|_{L^{3,\infty}}^{\frac{9}{8}}+ 1)(\|u_0\|_{L^{3,\infty}}^{3}+\|u_0\|_{L^{3,\infty}}^{\frac{18}{5}}).
\end{equation}
\end{lemma}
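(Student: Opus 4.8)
The plan is to run a singular Grönwall argument on the shifted unknown $w^N=u+\tilde V^N$ introduced in (\ref{w>N}), for which Lemma \ref{energyinequalitysplitting} already supplies a genuine global energy inequality, and only at the very end to transfer the bound back to $u$. Writing $A(t):=\|w^N(\cdot,t)\|_{L_2}^2$ and $E(t):=\int_0^t\int_{\mathbb R^3}|\nabla w^N|^2\,dx\,dt'$, I would start from (\ref{w>Nenergyineq}) and estimate its two driving terms by (\ref{Barkserest1}) and (\ref{Barkserest2}). The term $\int\bar V^N\otimes\bar V^N:\nabla w^N$ is linear in $\nabla w^N$, so Young's inequality turns (\ref{Barkserest2}) into $\tfrac14 E(t)+Ct^{7/10}N^{2/5}\|u_0\|_{L^{3,\infty}}^{18/5}$, which produces the second non-exponential term in (\ref{venergybdd}). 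For $\int\bar V^N\otimes w^N:\nabla w^N$ I would apply Young's inequality to (\ref{Barkserest1}) with exponents $5/4$ and $5$, absorbing $(E(t))^{4/5}$ into $\tfrac14 E(t)$ and leaving $CN^{1/2}\|u_0\|_{L^{3,\infty}}^{9/2}\int_0^t \tau^{-3/4}A(\tau)\,d\tau$.

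For the initial-data term, Lemma \ref{Decomp} with $r=3$ and Lebesgue exponent $2$ gives $\|\tilde u_0^N\|_{L_2}^2\le CN^{-1}\|u_0\|_{L^{3,\infty}}^3$, the first non-exponential term. Collecting these and absorbing the two $\tfrac14 E(t)$ into the $2E(t)$ on the left, I arrive at $A(t)+E(t)\le B(N,t)+CN^{1/2}\|u_0\|_{L^{3,\infty}}^{9/2}\int_0^t \tau^{-3/4}A(\tau)\,d\tau$, where $B(N,t)=C(N^{-1}\|u_0\|_{L^{3,\infty}}^3+t^{7/10}N^{2/5}\|u_0\|_{L^{3,\infty}}^{18/5})$ is nondecreasing in $t$. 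Dropping $E$ and noting that the kernel $\tau^{-3/4}$ is integrable with $\int_0^t\tau^{-3/4}\,d\tau=4t^{1/4}$, a singular Grönwall lemma yields $A(t)\le B(N,t)\exp(CN^{1/2}\|u_0\|_{L^{3,\infty}}^{9/2}t^{1/4})$, which is precisely the exponential factor in (\ref{venergybdd}). Feeding this bound for $A$ back into the integral term then gives $E(t)$ a bound of the same shape, up to bookkeeping of the exponents.

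It remains to pass from $w^N$ to $u$. Using $u=w^N-\tilde V^N$ and the energy identity (\ref{V>Nenergyequality}), together with $\|\tilde u_0^N\|_{L_2}^2\le CN^{-1}\|u_0\|_{L^{3,\infty}}^3$, one controls both $\|\tilde V^N(\cdot,t)\|_{L_2}^2$ and $\int_0^t\int|\nabla\tilde V^N|^2$ by a term already present in $B(N,t)$; hence $\|u(\cdot,t)\|_{L_2}^2+\int_0^t\int|\nabla u|^2\le 2(A(t)+E(t))+CN^{-1}\|u_0\|_{L^{3,\infty}}^3$, which after collecting powers is (\ref{venergybdd}). The scale-invariant estimate (\ref{venergybddscaled}) follows by inserting the balance $N=t^{-1/2}$, chosen to equate the $N^{-1}$ gain of the decomposition against the $N$-growth of the error terms.

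The main obstacle I anticipate is the Grönwall step itself: there is no a priori energy control for $u$ (this is the difficulty flagged after Lemma \ref{integabilitynonlinearity}, since $V\in L^{3,\infty}$ only barely fails to be energy-class), so the whole argument rests on the decomposition $w^N=u+\tilde V^N$, which shifts the unbounded-energy part $\bar V^N$ into a critical coefficient and the low-frequency part $\tilde V^N$ into $L_2$. I must verify carefully that the weight $\tau^{-3/4}$ left by Young's inequality on (\ref{Barkserest1}) is integrable, so that the Grönwall constant stays finite, and that $B(N,t)$ is monotone so the Grönwall conclusion applies; everything else is the tracking of exponents and the optimization in $N$.
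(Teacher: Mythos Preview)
Your proposal is correct and follows essentially the same approach as the paper: reduce to $w^N$ via $u=w^N-\tilde V^N$ together with (\ref{V>Nenergyequality}) and (\ref{g2divfree}), apply Young's inequality (with exponents $5/4,\,5$ on (\ref{Barkserest1}) and $2,\,2$ on (\ref{Barkserest2})) to the right-hand side of (\ref{w>Nenergyineq}) to obtain the integral inequality $y_N(t)+\int_0^t\!\int|\nabla w^N|^2\le B(N,t)+CN^{1/2}\|u_0\|_{L^{3,\infty}}^{9/2}\int_0^t\tau^{-3/4}y_N(\tau)\,d\tau$, and close with a Gr\"onwall lemma for the integrable singular kernel $\tau^{-3/4}$. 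The paper presents the reduction to $w^N$ first rather than last, but the substance is identical.
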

\begin{proof}
First observe that  $u=w^N- \tilde V^N$. Thus, using (\ref{V>Nenergyequality}) we see that
$$\|u(\cdot,t)\|_{L_{2}}^2+\int\limits_0^t\int\limits_{\mathbb R^3} |\nabla u|^2dxdt'\leqslant$$$$\leqslant 2\|\tilde u^N_0\|_{L_{2}}^2+2\|w^N(\cdot,t)\|_{L_{2}}^2+2\int\limits_0^t\int\limits_{\mathbb R^3} |\nabla w^N|^2dxdt'. $$
By (\ref{g2divfree}):
\begin{equation}\label{u0>NL2norm}
\|\tilde u_0^N\|_{L_{2}}^2\leqslant CN^{-1}\|u_{0}\|_{L^{3,\infty}}^3.
\end{equation}
Thus, it is sufficient to prove (\ref{venergybdd}) for $w^N$ in place of $u$. From now on, denote 
$$y_N(t):=\|w^N(\cdot,t)\|_{L_{2}}^2.$$
Using (\ref{w>Nenergyineq}), estimates (\ref{Barkserest1})-(\ref{Barkserest2}), (\ref{u0>NL2norm}) and the Young's inequality obtain that
$$y_N(t)+\int\limits_0^t\int\limits_{\mathbb R^3} |\nabla w^N|^2dxdt'\leqslant CN^{\frac{1}{2}}\|u_{0}\|_{L^{3,\infty}}^{\frac{9}{2}}\int\limits_0^t \frac{y_N(\tau)}{\tau^{\frac{3}{4}}}d\tau+$$$$+C(N^{-1}\|u_0\|_{L^{3,\infty}}^3+t^{\frac{7}{10}}N^{\frac{2}{5}}\|u_0\|_{L^{3,\infty}}^{\frac{18}{5}}).
$$
The conclusion is then easily reached using a Gronwall type Lemma.
\end{proof}

\subsection{Existence of global weak $L^{3,\infty}(\mathbb R^3)$-solutions}

{\bf Proof of Theorem \ref{weak stability}} We have
$$u_{0}^{(k)}\stackrel{*}{\rightharpoonup} u_0$$ in $L^{3,\infty}$ and
may assume that 
$$M:=\sup\limits_k\|u_0^{(k)}\|_{L^{3,\infty}}<\infty.$$

Firstly, define
$$V^{(k)}(\cdot,t):= S(t)u_0^{(k)}(\cdot,t),\qquad V(\cdot,t):=S(t)u_{0}(\cdot,t).$$
By Proposition \ref{semigroupweak*stabilitypro}, we see that $V^{(k)}$ converges to $V$ on $Q_\infty$ in the sense of distributions.
By Proposition \ref{semigroupweakL3}, we see that
\begin{equation}\label{VkweakL3est}
\|V^{(k)}(\cdot,t)\|_{L^{3,\infty}}\leqslant CM,
\end{equation}
\begin{equation}\label{Vksemigroupest}
\|\partial^m_t\nabla^l V^{(k)}(\cdot,t)\|_{L_{r}}\leqslant \frac{CM}{t^{m+\frac l 2+{\frac 3 2}(\frac{1}{3}-\frac{1}{r}})}.
\end{equation}
Here $r\in ]3,\infty]$.
 For $T<\infty$ and $l\in ]1,\infty[$, we have the compact embedding
$$W^{2,1}_{l}(B(n)\times ]0,T[)\hookrightarrow C([0,T]; L_{l}(B(n))).$$
From this and (\ref{Vksemigroupest}) one immediately infers that for every $n\in\mathbb{N}$ and $l\in ]1,\infty[$:
\begin{equation}\label{V^kstrongconverg}
\partial^m_t\nabla^l V^{(k)}\rightarrow\partial^m_t\nabla^l V\,\,\,{\rm in}\,\,\, C([{1}/{n},n];L_{l}(B(n))).
\end{equation}

Fixing $N=1$ in Lemma \ref{venergyest} we have:
\begin{equation}\label{v^kenergybdd}
\|u^{(k)}(\cdot,t)\|_{L_{2}}^2+\int\limits_0^t\int\limits_{\mathbb R^3} |\nabla u^{(k)}|^2dxdt'\leqslant f_{0}(M,t).
\end{equation}
By means of a Cantor diagonalisation argument, we can abstract a subsequence such that for any finite $T>0$:
\begin{equation}\label{v_kweak*}
u^{(k)}\stackrel{*}{\rightharpoonup}u\,\,\, {\rm in}\,\,\,L_{2,\infty}(Q_T),
\end{equation}
\begin{equation}\label{gradv_kweak}
\nabla u^{(k)}{\rightharpoonup}\nabla u\,\,\, {\rm in}\,\,\,L_{2}(Q_T).
\end{equation}
Using (\ref{v_kweak*}), together with (\ref{venergybddscaled}), we also get that:
\begin{equation}\label{vzeronearinitialtime}
\|u\|_{L_{2,\infty}(Q_t)}\leqslant C(M)t^{\frac{1}{2}}.
\end{equation}

From (\ref{v^kenergybdd}) it is easily inferred that
\begin{equation}\label{v^k9/83/2bdd}
\|u^{(k)}\cdot\nabla u^{(k)}\|_{L_{\frac{9}{8},\frac{3}{2}(Q_t)}}\leqslant
f_{1}(M,t).
\end{equation}
By the same reasoning as in Lemma \ref{integabilitynonlinearity}, we obtain:
\begin{equation}\label{seqnonlin11/7}
\|V^{(k)}\cdot \nabla V^{(k)}\|_{L_{\frac{11}{7}}(Q_t)}\leqslant f_{2}(M,t),
\end{equation}
\begin{equation}\label{seqnonlin5/43/2}
 \|V^{(k)}\cdot\nabla u^{(k)}+u^{(k)}\cdot\nabla V^{(k)}\|_{L_{\frac{5}{4},\frac{3}{2}}(Q_t)}\leqslant f_{3}(M,t).
 \end{equation}
 Split $u^{(k)}=\sum_{i=1}^3 u^{i(k)}$ according to Definition \ref{globalL3inf}, namely (\ref{vdecomp}).
 By coercive estimates for the Stokes system, along with (\ref{v^k9/83/2bdd}) obtain:
 \begin{equation}\label{v^k_1est}
\|u^{1(k)}\|_{W^{2,1}_{\frac 9 8,\frac 3 2}(Q_t)}+\| \nabla p^{(k)}_1\|_{L_{\frac 9 8,\frac 3 2}(Q_t)}\leqslant Cf_1(M,t),
 \end{equation}
 \begin{equation}\label{v^k_2est}
 \|u^{2(k)}\|_{W^{2,1}_{\frac{11}{7}}(Q_t)}+\|\nabla p^{(k)}_2\|_{L_{\frac{11}{7}}(Q_t)}\leqslant Cf_2(M,t),
 \end{equation}
 \begin{equation}\label{v^k_3est}
 \|u^{3(k)}\|_{W^{2,1}_{\frac 5 4,\frac 3 2}(Q_t)}+\|\nabla p^{(k)}_3\|_{L_{\frac 5 4,\frac 3 2}(Q_t)}
 \leqslant Cf_3(M,t).
 \end{equation}
 By the previously mentioned embeddings, we infer from (\ref{v^k_1est})-(\ref{v^k_3est}) that for any $n\in\mathbb{N}$ we have the following convergence for a certain subsequence:
 \begin{equation}\label{v^kstrongconverg1}
 u^{(k)}\rightarrow u\,\,\,{\rm in}\,\,\, C([0,n]; L_{\frac{9}{8}}(B(n)).
 \end{equation}
 Hence, using (\ref{v^kenergybdd}), it is standard to infer that for any $s\in ]1,10/3[$
 \begin{equation}\label{v^kstrongconverg2}
 u^{(k)}\rightarrow u\,\,\,{\rm in}\,\,\, L_{s}(B(n)\times ]0,n[).
 \end{equation}
 It is also not so difficult to show that for any $f\in L_{2}$ and for any $n\in\mathbb N$:
 \begin{equation}\label{v^kweakcontconverg}
 \int\limits_{\mathbb R^3} u^{(k)}(x,t)\cdot f(x)dx\rightarrow \int\limits_{\mathbb R^3} u(x,t).f(x)dx\,\,\,{\rm in}\,\,\,C([0,n]).
 \end{equation}
 Using (\ref{vzeronearinitialtime}) with (\ref{v^kweakcontconverg}), we establish that
 \begin{equation}\label{vstrongestinitialtime}
 \lim_{t\rightarrow 0}\|u(\cdot,t)\|_{L_{2}}=0.
 \end{equation}
 All that remains to show is establishing the local energy inequality (\ref{localenergyinequality}) for the limit and establishing the energy inequality (\ref{energyinequalitysplitting}) for $u$.
 Verifying  the local energy inequality is not so difficult and hence omitted.
 Let us focus on verifying (\ref{energyinequalitysplitting}) for $u$. 
By identical reasoning to Lemma \ref{energyinequalitysplitting}, we have that for an arbitrary positive function $\phi_1(t)\in C_0^{\infty}(0,\infty)$:
\begin{equation}\label{venergyineqcompacttime}
\int\limits_{\mathbb R^3}\phi_1(t)|u(x,t)|^2dx+2\int\limits_{0}^t\int\limits_{\mathbb R^3}\phi_{1}(t) |\nabla u|^2 dxdt^{'}\leqslant$$$$\leqslant
\int\limits_{0}^{t}\int\limits_{\mathbb R^3}|u|^2\partial_{t}\phi_{1}+
2 (V\otimes  u+V\otimes V):\nabla u\phi_1 dxdt'.
\end{equation}
From Lemma \ref{integabilitynonlinearity} and semigroup estimates, we have that 
$$(V\otimes u+V\otimes V):\nabla u\in L_1(Q_T)$$
for any  positive finite $T$.
Using these facts and (\ref{vstrongestinitialtime}), the conclusion is reached by choosing appropriate $\phi_{\epsilon}=\phi_1$ and taking a limit. $\Box$

Let us comment on Corollary \ref{existenseglobal}. Recall that by Proposition \ref{weak*approx}, there exists 
a sequence  $u^{(k)}_{0}\in 
 C_{0,0}^{\infty}(\mathbb R^3)
 $
 such that 
$$u_{0}^{(k)}\stackrel{*}{\rightharpoonup} u_0$$ in $L^{3,\infty}$. 
It was shown in \cite{sersve2016} that for any $k$
there exists a global $L_{3}$-weak solution $v^{(k)}$. Now, Corollary \ref{existenseglobal} follows from Theorem \ref{weak stability}.

\setcounter{equation}{0}
\section{Uniqueness }
First we introduce the notation $Q(z_0,R)= B(x_0,R)\times ]t-R^2,t[.$
Here, $z_0=(x_0,t)\in Q_{\infty}.$

{\bf Proof of Theorem \ref{smoothnessanduniqueness}}
Step I. Regularity. Our first remark is that, given $\varepsilon>0$ and $R>0$, 
there exists a number
$R_*(T,R,\varepsilon)  >0$ such that
if $B(x_0,R)\subset \mathbb R^3\setminus B(R_*)$ and $t_0-R^2>0$ then
$$\frac 1{R^2}\int\limits_{Q(z_0,R)}(|v|^3+|q-[q]_{B(x_0,R)}|^\frac 32)dx dt \leq \varepsilon .$$
For $v$ 
 it is certainly true. For $q$, we can use Lemma \ref{energyinequalitysplitting} 
 Indeed, if $q=p_{1}+p_{2}+p_{3}$, then, for example, we have
$$\frac 1{R^2}\int\limits_{Q(z_0,R)}|p_{1}-[p_{1}]_{B(x_0,R)}|^\frac 32dxds\leq
$$
$$ \leq\frac 1{R^2}\int\limits^T_0\int\limits_{B(x_0,R)}|p_{1}-[p_{1}]_{B(x_0,R)}|^\frac 32dxds\leq \frac 1{R^\frac 32}\int\limits^{T}_{0}\Big(\int\limits_{B(x_0,R))}|\nabla p_{1}|^\frac 98dx\Big)^\frac 43dt\leq $$$$\leq \frac 1{R^\frac 32}\int\limits^{T}_{0}\Big(\int\limits_{\mathbb R^3\setminus B(R_*))}|\nabla p_{1}|^\frac 98dx\Big)^\frac 43dt\to0$$
as $R_*\to\infty$ for any fixed $R>0$. Since 
the pair $v$ and $q$ satisfies the local energy inequality,
by $\varepsilon$-regularity theory developed    in \cite{CKN}, we can claim that
$$|v(z_0)|\leq \frac cR$$
as long as $z_0$ and $R$ satisfy the conditions above. 

Now, our aim is to show that $v$ is locally bounded. To this end, we can use condition (\ref{initialdata1})
and state that there exists $R_0(x_0,\varepsilon_0)>0$ such that
$$\|u_0\|_{L^{3,\infty}(B(x_0,R))}<\varepsilon_0$$
for all $0<R<R_0(x_0,\varepsilon_0)$.
Then

$$\|v(\cdot,t)\|_{L^{3,\infty}(B(x_0,R))}\leq 
\|u_0\|_{L^{3,\infty}(B(x_0,R))}+\varepsilon_0<
2\varepsilon_0$$
for all $0<R<R(x_0,\varepsilon_0)$ and for all $t\in ]0,T[$.
.   

 Using H\"older inequality for Lorentz spaces, we have 
$$\frac 1r\Big(\int^{t_0}_{t_0-r^2}\Big(\int\limits   
_{B(x_0,r)}|v|^2dx\Big)^2dt\Big)^\frac 14\leq
$$ $$\leq c\sup\limits_{t_0-r^2<t<t_0}\|v(\cdot,t)\|_{L^{3,\infty}(B(x_0,r))}\leq c\varepsilon_0$$
for all $t_0\in ]0,T]$, for all $0<r<R_0(x_0,\varepsilon_0)$ satisfying $t_0-r^2>0$, and $c$ is a positive universal constant. Then the local boundedness follows from $\varepsilon$-regularity conditions derived in \cite{SerZa} with a suitable choice of the constant $\varepsilon_0$.

So, we can ensure that $v\in L_\infty(Q_{\delta,T})$ for any $\delta>0$. Here, $Q_{\delta,T}=\mathbb R^3\times ]\delta,T[$. Then, we can easily deduce that, for any $\delta>0$, $u\in W^{2,1}_2(Q_{\delta,T})$, $\nabla u\in L_{2,\infty}(Q_{\delta,T})$, and $\nabla q\in L_2(Q_{\delta,T})$.
By iterative arguments, we complete the proof of the theorem.

Step II. Uniqueness. Regularity results proved above
 allow us to state that   the energy identity
$$\frac 12\int\limits_{\mathbb R^3}|u(x,t)|^2dx+\int\limits^t_0\int\limits_{\mathbb R^3}|\nabla u|^2dxds= \int\limits^t_0\int\limits_{\mathbb R^3}V\otimes v:\nabla u
dxds$$
holds for any $t>0$ and, moreover,
$$ \int\limits_{\mathbb R^3}\Big(\partial_tu(x,t)\cdot w(x)+(v(x,t)\cdot\nabla v(x,t))\cdot w(x)+\nabla u(x,t):\nabla w(x)\Big)dx=0 $$
for any $w\in C^\infty_{0,0}(\mathbb R^3)$ and for all $t\in ]0,T[$.

Letting  $\tilde u=\tilde v-V$ and $w=\tilde u-u$, 
we can repeat the same arguments as in \cite{sersve2016} to obtain

$$\frac 12  \int\limits_{\mathbb R^3}|w(x,t_0)|^2dx+\int\limits_0^{t_0}\int\limits_{\mathbb R^3}|\nabla w|^2dxdt\leq $$
$$\leq \int\limits_0^{t_0}\int\limits_{\mathbb R^3}\Big(\tilde v\otimes \tilde v:\nabla w- v\otimes v:\nabla w\Big)dxdt=\int\limits_0^{t_0}\int\limits_{\mathbb R^3}(w\otimes v+v\otimes w):\nabla wdxdt.$$
So, finally,
$$I:=\int\limits_{\mathbb R^3}|w(x,t_0)|^2dx+\int\limits_0^{t_0}\int\limits_{\mathbb R^3}|\nabla w|^2dxdt\leq$$$$\leq c\int\limits_0^{t_0}\int\limits_{\mathbb R^3}|v|^2|w|^2dxdt.
$$
Let us fix $s\in ]0,T[$, then
$$I\leq cI_1+cI_2+cI_3,$$
where
$$I_1=\int\limits_0^{t_0}\int\limits_{\mathbb R^3}|v(x,t)-u_0(x)|^2|w(x,t)|^2dxdt, $$$$ I_2=\int\limits_0^{t_0}\int\limits_{\mathbb R^3}|v(x,s)-u_0(x)|^2|w(x,t))|^2dxdt, $$
$$I_3=\int\limits_0^{t_0}\int\limits_{\mathbb R^3}|v(x,s)|^2|w(x,t)|^2dxdt. $$ 

The first two integrals are evaluated in the same way with the help of the H\"older inequality for Lorentz spaces:
$$c(I_1+I_2)\leq c\int\limits^{t_0}_0(\|v(\cdot,t)-u_0(\cdot)\|^2_{L^{3,\infty}}+$$$$+\|v(\cdot,s)-u_0(\cdot)\|^2_{L^{3,\infty}})\|w(\cdot,t)\|^2_{L^{6,2}}dt.$$
By assumptions of the theorem, 
$$c(I_1+I_2)\leq c\varepsilon_0\int\limits^{t_0}_0\|w(\cdot,t)\|^2_{L^{6,2}}dt.$$
It remains to apply the Sobolev inequality and conclude
that
$$c(I_1+I_2)\leq c\varepsilon_0\int\limits^{t_0}_0\|\nabla w(\cdot,t)\|^2_{L_2}.$$

To estimate  $I_3$, we are going to use the fact that $v(\cdot,s)$ is bounded for positive $s\leq T$, i.e.,
$$\|v(\cdot,s)\|_{L_\infty}\leq g(s).$$
Here, it might happen that $g(s)\to\infty$ if $s\to0$. So,
$$I_3\leq g^2(s)\int\limits^{t_0}_0\int\limits_{\mathbb R^3}|w(x,t)|^2dxdt.$$
Then reducing  $\varepsilon_0$ if necessary, we find 
 
$$\int\limits_{\mathbb R^3}|w(x,t_0)|^2dx+\int\limits_0^{t_0}\int\limits_{\mathbb R^3}|\nabla w|^2dxdt\leq cg^2(s)\int\limits_0^{t_0}\int\limits_{\mathbb R^3}|w(x,t)|^2dxdt$$
for all $t_0\in ]0,T[$, which implies that $w(\cdot,t)=0$ for the same $t$. $\Box$

To justify Corollary \ref{uniqueness}, we can argue as follows. First, it can be shown that
$$\|u_0\|_{L^{3,\infty}(B(x_0,R))}\to0$$
as $R\to0$. Indeed, if $v$ is a weak $L^{3,\infty}$-solution in $Q_T$, then for a.a. $t\in ]0,T[$ we have $v(\cdot,t)\in L^{3,\infty}$ along with the following property. Namely, for all $x_0\in\mathbb R^3$:
$$\|v(\cdot,t)\|_{L^{3,\infty}(B(x_0,R))}\to0$$
as $R\to0$. Since it is assumed that $v\in C([0,T];L^{3,\infty})$, the above property in fact holds for all $t\in [0,T]$.  

Now, one should split the interval $[0,T]$
into sufficiently small pieces by points $t_k=kT/N$ with $k=1,2,...,N$ so that
$$\|v(\cdot,t)-v(\cdot,t_{k-1})\|_{L^{3,\infty}(\mathbb R^3)}<\varepsilon_0$$
for any $t\in [t_{k-1},t_k]$ and for all $k=1,2,...,N$. It remains to apply Theorem \ref{smoothnessanduniqueness} successively for $k=1,2,...,N$.

\setcounter{equation}{0}
\section{ Regularity}
{\bf Proof of Theorem \ref{regularity}}
We use the Kato iteration scheme. Let us  define the following,
 for $k=1,2,...,$
$$v^{(1)}=V,\qquad V^{(k+1)}=V+u^{(k+1)},
$$
where $u^{(k+1)}$ solves the following problem
$$\partial_tu^{(k+1)}-\Delta u^{(k+1)}+\nabla q^{(k+1)}=-{\rm div}\,v^{(k)}\otimes v^{(k)},\,\,\,\rm{div}\, u^{k+1}=0$$ in 
$Q_T$, 
$$u^{(k+1)}(\cdot,0)=0$$ 
in $\mathbb R^3$. It is easy to check that 
for solutions to the above linear problem the following estimates are
true 
$$\langle u^{(k+1)}\rangle_{Q_T}\leq c\langle v^{(k)}\rangle_{Q_T}^2,$$ 
$$\|u^{(k+1)}\|_{L_\infty(0,T;L_3)}\leq c\langle v^{(k)}\rangle_{Q_T}^2$$
and thus we have 
$$
\langle v^{(k+1)}\rangle_{Q_T}\leq \langle V\rangle_{Q_T}+c\langle v^{(k)}\rangle_{Q_T}^2,
$$
$$\|v^{(k+1)}\|_{L_\infty(0,T;L^{3,\infty})}\leq \|V\|_{L_\infty(0,T;L^{3,\infty})}+c\langle v^{(k)}\rangle _{Q_T}^2,$$
and 
$$\|v^{(k+1)}-V\|_{L_\infty(0,T;L_3)}\leq c\langle v^{(k)}\rangle_{Q_T}^2 
$$
for all $k=1,2,...$. Using Kato's arguments, one easily show that for  $\varepsilon<\frac{1}{4c}$ 
we shall have 
\begin{equation}\label{1stest}
\langle v^{(k)}\rangle_{Q_T}<2\langle V\rangle _{Q_T}  \end{equation}
for all $k=1,2,...$.
 We get, in addition, that
\begin{equation}\label{1stest1}
\|v^{(k)}\|_{L_\infty(0,T;L^{3,\infty})}\leq \|V\|_{L_\infty(0,T;L^{3,\infty})}+\langle V\rangle_{Q_T},\end{equation}
\begin{equation}\label{1stest2}
\|v^{(k+1)}-V\|_{L_\infty(0,T;L_3)}\leq  \langle V\rangle_{Q_T}\end{equation}
for all $k=1,2,...$.
Furthermore, Kato's arguments also give that there is a $v=V+u$ such that
\begin{equation}\label{converg1}
\langle v^{(k)}-v\rangle_{Q_T},\, \langle u^{(k)}-u\rangle_{Q_T}\rightarrow 0,
\end{equation}
\begin{equation}\label{converg2}
\|v^{(k)}-v\|_{L_{\infty}(0,T; L^{3,\infty})},\,\|u^{(k)}-u\|_{L_{\infty}(0,T; L_{3})}\rightarrow 0.
\end{equation} 
Next we note that by interpolation:
 \begin{equation}\label{interpolationineq}
 t^{\frac 1 8}\|g(\cdot,t)\|_{L_{4}}\leq  C(\|g(\cdot,t)\|_{L^{3,\infty}})^{\frac 3 8}(t^{\frac 1 5}\|g(\cdot,t)\|_{L_{5}})^{\frac{5}{8}}.
 \end{equation}
 Using this and (\ref{converg1})-(\ref{converg2}), we immediately see that
 \begin{equation}\label{converg3}
 \|v^{(k)}-v\|_{L_{4}(Q_T)},\,\|u^{(k)}-u\|_{L_{4}(Q_T)}\rightarrow 0.
 \end{equation}

We also can exploit our equation, together with the pressure equation, to derive  the following estimate for the energy and pressure:
\begin{equation}\label{energypresestimate}
\|u^{(k)}-u^{(m)}\|^2_{2,\infty,Q_T}+\|\nabla u^{(k)}-u^{(m)}\|^2_{2,Q_T}+\|q^{(k)}-q^{(m)}\|^2_{2,Q_T}\leq$$$$\leq c\int\limits^T_0\int\limits_{\mathbb R^3}|v^{(k)}\otimes v^{(k)}-v^{(m)}\otimes v^{(m)}|^2dxdt.
\end{equation}

Using (\ref{converg3}), we immediately see the following
 \begin{equation}\label{u^nstrongconverg}
 u^{(k)}\rightarrow u\,\,{\rm in}\,\,W^{1,0}_{2}(Q_T)\cap C([0,T];L_2(\mathbb{R}^3))\cap L_{4}(Q_T) 
 ,
 \end{equation}
 \begin{equation}\label{limnearinitialtime}
 u(\cdot,0)=0
 ,
 \end{equation}
 \begin{equation}\label{presconvergence}
 q^{(k)}\rightarrow q\,\,{\rm in}\,\, L_{2}(Q_T).
 \end{equation}
 Clearly, the pair $v$ and $q$ satisfies the Navier-Stokes equations, in a distributional sense.
 It is easily verified that 
 \begin{equation}\label{semigrouppropertiesgradient}
 S(t)u_0 \in L_{4}(Q_{T})\cap L_{2,\infty}(B(R)\times ]0,T[)\cap W^{1,0}_{2}(B(R)\times ]\epsilon,T[)
 \end{equation}
 for any $0<R$, $0<\varepsilon<T$. 
 By (\ref{u^nstrongconverg})-(\ref{presconvergence}), $v$ has the same property. It is known that this, along with $q\in L_{2}(Q_{T})$, is sufficient to infer that the pair $v$ and  $q$ satisfies the local energy equality. This can be shown by a mollification argument.
 Showing that $u$ satisfies the energy inequality (on $Q_{T}$) present in our definition of global weak $L^{3,\infty}$ solution  (in fact, in this case it is an equality), can now be carried out in a similar way to Lemma \ref{energyinequalitysplitting}. Here, certain decay properties of $u,q$ from (\ref{u^nstrongconverg})-(\ref{presconvergence}) are needed, as well as the fact that $\lim_{t\rightarrow 0^+}\|u(\cdot,t)\|_{L_{2}(\mathbb{R}^3)}=0$. $\Box$

 {\bf Proof of Theorem \ref{kozonoyamazakitypecondition}}
 Condition (\ref{initialdatacondition}) ensures that there exists an $N>0$ such that $$\| (u_0)^{N}_+\|_{L^{3,\infty}}<\varepsilon_3.$$
 Thus, by the convolution inequality,
 $$<S(t) (u_0)_+^{N}>_{Q_T},\,\|S(t) (u_0)_+^{N} \|_{L_{\infty}(0,T; L^{3,\infty})}<C\varepsilon_3.$$
 By Lemma \ref{Decomp},
  we have that
 $$ \|(u_0)_-^{N}\|_{L_5}\leq CN^{\frac 2 5}\|u_{0}\|_{L^{3,\infty}}^{\frac 3 5}.$$
 Thus,
 $$\langle V\rangle_{Q_T}<C\varepsilon_3+
T^{\frac{1}{5}}C N^{\frac{2}{5}}\|u_{0}\|_{L^{3,\infty}}^{\frac{3}{5}}.$$
 Taking $T:=T(u_0)$ and $\varepsilon_3$ sufficiently small gives, by Theorem \ref{regularity}, the existence of \ weak $L^{3,\infty}$ solution on $Q_{T}$ such that
 $$
 \|v-S_1(t)(u_0)_{-}^{N}\|_{L_{\infty}(0,T; L^{3,\infty})}\leq$$
 \begin{equation}\label{vclosebdd}\leq   \|v-V\|_{L_{\infty}(0,T; L^{3,\infty})}+\|S_1(t)(u_0)_{+}^{N}\|_{L_{\infty}(0,T; L^{3,\infty})}<$$$$
 <V>_{Q_T}+C\varepsilon_3<\varepsilon_0.  
 \end{equation}
 Next we  notice that $S_1(t)(u_0)_{-}^{N}$ is bounded in $(Q_{T})$ and moreover
 \begin{equation}\label{semigroupshirnking}
 \|S_1(t)(u_0)_{-}^{N}\|_{L^{3,\infty}(B(x_0,R))}\leq  C R N.
 \end{equation}
 These facts, along with (\ref{vclosebdd}), are enough to conclude by using minor adaptations to the proof of Theorem \ref{smoothnessanduniqueness}.
 $\Box$
 \begin{remark}\label{Tlowerbound1}
 Furthermore, there is the lower bound for T:
 \begin{equation}\label{Tlowerbound}
 T\geq \frac{\min(\varepsilon^5,\varepsilon_0^5)}{C N^2\|u_0\|^3_{L^{3,\infty}}}.
 \end{equation} 
 Here, $C$ is a universal constant. Moreover, $\varepsilon$ and $\varepsilon_0$ are from Theorems \ref{smoothnessanduniqueness} and  \ref{regularity} respectively. 
 \end{remark}

\end{document}